\patchcmd{\section}{\scshape}{\bfseries}{}{}
\renewcommand{\@secnumfont}{\bfseries}
\newlength{\varwidth}
\newcommand{\shortoverline}[1]{%
  \overline{\vphantom{p}\phantom{#1}}%
}
\renewcommand{\O}{\mathcal{O}}
\DeclareRobustCommand{\SkipTocEntry}[5]{}
\newtheorem{introtheorem}{Theorem}
\newtheorem{introconjecture}[introtheorem]{Conjecture}
\newtheorem{introquestion}[introtheorem]{Question}
\theoremstyle{definition}
\newtheorem{introexample}[introtheorem]{Example}
\newtheorem{introproblem}[introtheorem]{Problem}
\newtheorem*{introdefinition*}{Definition}
\theoremstyle{plain}
\newcommand\Q{{\mathbf Q}}
\newcommand\Z{{\mathbf Z}}
\newcommand\N{{\mathbf N}}
\renewcommand{\P}{\mathbf{P}}
\theoremstyle{plain}
\newtheorem{theorem}{Theorem}[section]
\newtheorem{proposition}[theorem]{Proposition}
\newtheorem{lemma}[theorem]{Lemma}
\newtheorem{corollary}[theorem]{Corollary}
\theoremstyle{definition}
\newtheorem{remark}[theorem]{Remark}
\newtheorem{example}[theorem]{Example}
\newcommand{\continuation}{??}
\newenvironment{continueexample}[1]
 {\renewcommand{\continuation}{\ref{#1}}\excont[continued]}
 {\endexcont}
\renewcommand{\geq}{\geqslant}
\renewcommand{\leq}{\leqslant}
\newcommand{\abs}[1]{\left| #1 \right|}
\newcommand{\A}{\mathcal{A}}
\newcommand{\B}{\mathcal{B}}
\renewcommand{\P}{\mathcal{P}}
\newcommand{\MEF}{\mathrm{MEF}}
\begin{document}

\date{\today}
 \title{Automaticity of uniformly recurrent substitutive sequences}

\author[El\.zbieta Krawczyk]{El\.zbieta Krawczyk}
\address{Faculty of Mathematics\\
University of Vienna\\
Oskar Morgensternplatz 1\\
1090 Vienna, Austria \& 
Faculty of Mathematics and Computer Science\\Institute of Mathematics\\
Jagiellonian University\\
\textnormal{Stanis\l{}awa \L{}ojasiewicza 6}\\
30-348 Krak\'{o}w, Poland}
\email{ela.krawczyk7@gmail.com}

\author[Clemens M\"ullner]{Clemens M\"ullner}
\address{Institute for discrete mathematics and geometry, TU Wien, Wiedner Hauptstr. 8–10, 1040
Wien, Austria}
\curraddr{}
\email{clemens.muellner@tuwien.ac.at}

\subjclass[2010]{Primary: 11B85, 37B10, 68R15. Secondary: 37A45, 68Q45}
\keywords{Automatic sequence, substitutive sequence, substitutive system}

\begin{abstract} We provide a complete characterisation of automaticity of uniformly recurrent substitutive sequences in terms of the incidence matrix of the return substitution of the underlying purely substitutive sequence. This resolves a recent question posed by Allouche, Dekking and Queff\'elec in the uniformly recurrent case. We show that the same  criterion characterizes automaticity of minimal substitutive systems. 

    Furthermore, we construct a minimal substitutive system whose maximal equicontinuous factor is the 2-adic odometer, and for which the corresponding factor map is everywhere uncountable-to-one. We conjecture that a minimal substitutive system is
    $k$-automatic if and only if it is an everywhere finite-to-one extension of a $k$-adic odometer.
 \end{abstract}

\maketitle

\section*{Introduction and statement of the results}\label{sec:introduction}

 It has been observed in several contexts that certain substitutive sequences defined using substitutions of non-constant length could in fact also be obtained from substitutions of constant length. While it is easy to construct such examples artificially, they also occur naturally, and the corresponding constant-length substitution is often by no means obvious. Such discoveries of 'hidden automatic sequences' (a name we borrow from \cite{hidden-20}) are often insightful since automatic sequences are considerably better understood  and can be treated using more specialized tools  (e.g.\  finite automata).  A particularly striking example is the Lys\"{e}nok morphism related to the presentation of the first Grigorchuk group \cite{GLN-18}, where spectral properties of the system generated by the Lys\"{e}nok morphism are used to deduce spectral properties of the Schreier graph of the Grigorchuk group.   
 
 In the opposite direction, a problem of showing that a given substitutive sequence is not automatic has also appeared in several contexts, e.g.\ in the study of gaps between factors in the famous Thue--Morse sequence \cite{Spieg-21} or in the mathematical description of  the drawing of the classical  Indian kolam \cite{AAS-06}. In each case, some \emph{ad hoc} methods are employed to prove or disprove the automaticity of the substitutive sequence under consideration.

The problem of how to recognize that a substitutive sequence is automatic  has been raised recently in \cite{hidden-20} by Allouche, Dekking and Queff\'elec, and we refer the reader there for other interesting examples. 
    However, the problem  was already explicitly stated in 2003 in the classical book of Allouche and
Shallit  \cite[Prob. 3, Sec. 7.11]{AlloucheShallit-book}.
    It is natural to also consider the dynamical version of the problem.

\begin{introproblem}\cite{hidden-20}\label{prob} For a given substitutive sequence $x$, decide whether it is automatic. For a given system $X=\overline{\mathcal{O}(x)}$ generated by a substitutive sequence $x$ decide whether it is automatic (i.e.\ whether it can be generated by an automatic sequence).
\end{introproblem}

The purpose of this paper is to solve Problem \ref{prob} in the case when $x$ is uniformly recurrent. It turns out that the answer does not depend on which version of the problem one considers and, indeed, dynamical methods play an important role in the resolution of the original question. Since any periodic sequence is $k$-automatic for any $k\geq 2$, we restrict ourselves to the case when $x$ is nonperiodic (equivalently, $X$ is infinite).

Formally, a (one-sided or two-sided) sequence $x$ is called \emph{substitutive} if it is a coding of a fixed point of some substitution $\varphi\colon \mathcal{A}\to\mathcal{A}^*$ over a finite alphabet $\mathcal{A}$.
If one can take the substitution $\varphi$ to be of constant length $k$, then $x$ is called $k$-\emph{automatic}.
A (one sided or two-sided) symbolic system $X$ is called \emph{substitutive} (resp.\ $k$-\emph{automatic}) if there exists a substitutive (resp.\ $k$-automatic) sequence $x$ such that $X=\overline{\mathcal{O}(x)}$.

For a primitive substitution $\varphi\colon \mathcal{A}\to\mathcal{A}^*$,  let $x$ be a (one-sided or admissible two-sided) fixed point of $\varphi$ and let $a=x_0$. (A two-sided fixed point $x$ of $\varphi$ is \textit{admissible} if  the word $x_{-1}x_0$ appears in $\varphi^n(a)$ for some $a\in\mathcal{A}$ and $n\geq 1$.)
 Let $\mathrm{R}_a$ be the set of return words to $a$ in $x$, that is, the set of words  $w\in\mathrm{L}(x)$ such that  $w$ starts with $a$, $w$ has exactly one occurrence of $a$, and $wa\in\mathrm{L}(x)$. Since $\varphi$ is primitive, $\mathrm{R}_a$ is finite and we may define in the natural way   the return substitution $\tau\colon\mathrm{R}_a\to\mathrm{R}_a^*$ of $\varphi$ to $a$. (See Section \ref{sec:proof} for precise definitions; for an example, see  Example \ref{ex:return} below). Our main result is the following theorem.

\begin{introtheorem}\label{thm:automatic} Let $\varphi\colon \mathcal{A}\to\mathcal{A}^*$ be a primitive substitution, let $x$ be a one-sided or an admissible two-sided fixed point of $\varphi$, assume that $x$ is nonperiodic, and let $a=x_0$. Let $X$ be the (one-sided or two-sided) system generated by $x$.  Let $\tau\colon\mathrm{R}_a\to\mathrm{R}_a^*$ be the return substitution to $a$, let $M_{\tau}$ denote  the incidence matrix of $\tau$, and let $s\geq 0$ denote the size of the largest Jordan block of $M_{\tau}$ corresponding to the eigenvalue $0$.  The following conditions are equivalent:\begin{enumerate}
\item\label{thm:automatic0} $x$ is an automatic sequence;
\item\label{thm:automatic0.5} there is a coding $\varrho\colon \A\to\B$ such that $y=\varrho(x)$ is nonperiodic and automatic;
\item\label{thm:automatic3} $X$ is an  automatic system;
\item\label{thm:automatic1} $X$ has an infinite automatic system $Y$ as a (topological) factor;
\item\label{thm:automatic2} ${}^{t}(|\varphi^s(w)|)_{w\in\mathrm{R_{a}}}$ is a left eigenvector of $M_{\tau}$.
\end{enumerate}
\end{introtheorem}

\begin{introexample}\label{ex:return}Let $\varphi\colon\mathcal{A}\to\mathcal{A}^*$ be a primitive substitution given by 
\[a\mapsto aca,\ b\mapsto bca, c\mapsto cbcac,\]
and let $x=acac\dots$ be a (one-sided) fixed point of $\varphi$ starting with $a$.   The set of return words to $a$ in $x$ is given by $\mathrm{R}_a=\{ac, acbc\}$. To see this, note that $ac$ is the first return word to $a$ occurring in $x$. The word $\varphi(ac)=ac|acbc|ac$ is a concatenation of 3 return words to $a$ in which $acbc$ is the only new word. Applying $\varphi$ to it, we see that $\varphi(acbc)=ac|acbc|acbc|acbc|ac$ is a concatenation of $5$ return words and no new return words appear in this factorisation. Hence $\mathrm{R}_a$ consists exactly of these two words. Relabelling, $1=ac$, $2=acbc$, we get that the return substitution $\tau\colon \{1,2\}\to \{1,2\}^*$ is given by
\[1\mapsto 121, \ 2\mapsto 12221.\]

It is easy to check that the fixed point $x$ of $\varphi$ is not periodic. The incidence matrix of the return substitution $\tau\colon \{1,2\}\to \{1,2\}^*$ is given by 
\[M_{\tau}= \begin{pmatrix}2 & 2 \\
1 & 3
\end{pmatrix},\]
and has eigenvalues 4 and 1; in particular, s=0. Since ${}^{t}(|w|)_{w\in\mathrm{R_{a}}}=(2,4)$ is a left eigenvector of $M_{\tau}$ corresponding to the eigenvalue 4, by Theorem \ref{thm:automatic}, the fixed point $x$ of $\varphi$ is automatic (more precisely, it is $4$-automatic, and, hence, $2$-automatic). 
 \end{introexample}

Theorem \ref{thm:automatic} settles Problem \ref{prob} for any nonperiodic uniformly recurrent substitutive sequence $y$ since any such sequence is given as a coding $\varrho$ of a fixed point $x$ of some primitive substitution, which can be found algorithmically, see  \cite[Thm.\ 3]{D-recurrence}, or \cite[Sec.\ 3]{DL-2018} (see also \cite[Thm.\ ~2.1]{MR} for a related result); in this case the coding $\varrho$ induces also a natural factor map between the systems $X$ and $Y$ generated by $x$ and $y$, respectively. 
The proof is algorithmic; if condition \eqref{thm:automatic2} is satisfied, it yields a construction of a constant length substitution and a coding generating $x$.

 Let $x$ be a substitutive sequence generated by some substitution $\varphi$ and coding $\tau$. A well-known necessary condition for  $x$  to be automatic  comes from a version of Cobham's theorem proven by Durand \cite{D-cobham}: if $x$ is not ultimately periodic and $k$-automatic, then the dominant eigenvalue of $M_{\varphi}$ is multiplicatively dependent with $k$.  This condition is of course far from being sufficient: there are many primitive substitutions whose dominant eigenvalue is an integer and whose nonperiodic fixed points are not automatic, see e.g.\ Example \ref{ex:gaps}, \ref{ex:kolam}, or \ref{ex:spectral} below. 
    In the opposite direction, a  useful sufficient condition for $x$ to be automatic was obtained by Dekking in 1976: $x$ is automatic if the length vector $^{t}(|\varphi(a)|)_{a\in\mathcal{A}}$ is a left eigenvector of $M_{\varphi}$ \cite{DK-77}, see also \cite{hidden-20}. In Corollary \ref{cor:left-proper} below we show that for a special class of \emph{left-proper} substitutions Dekking's criterion is essentially an if and only if condition. A recent paper by Allouche, Shallit and Yassawi \cite{ASY-21} provides a handy toolkit of methods of showing that a (substitutive) sequence is not automatic; nevertheless, prior to Theorem \ref{thm:automatic} no general necessary and sufficient conditions were known.

    At the same time, ~\cite[Section 10]{ASY-21} raises an interesting point: one can often detect that a uniformly recurrent substitutive sequence $x$ is not automatic by analysing the maximal equicontinuous factor ($\MEF$) of the dynamical system $X$ generated by $x$. In particular, if $X$ admits a dynamical eigenvalue which is not a root of unity, then $x$ cannot be automatic. This criterion is again not necessary: there exist minimal non-automatic substitutive systems whose $\MEF$ is a $k$-adic odometer; see Example~\ref{ex:spectral}.

Nevertheless, the question of whether automaticity of substitutive systems can be characterised by purely dynamical properties appears to be a natural and interesting one. In fact, the non-automaticity of the system $X$ in Example~\ref{ex:spectral} \emph{can} be deduced by purely dynamical means. We show in Section~\ref{sec:spectral} that a factor map
$\pi \colon X\to \Z_2$ to the $\MEF$ $\Z_2$ of $X$ is \emph{everywhere} infinite-to-one, whereas it is well known that every $k$-automatic system is a finite-to-one extension of a $k$-adic odometer \cite{DK-77}. As far as we are aware, this provides the first such example in the literature---likely because fibre properties of the $\MEF$ have traditionally been studied only for constant-length or Pisot-type symbolic substitution systems.

\begin{introtheorem}\label{intothm:infinitefibres}
Let $\varphi\colon \{a,b\}\to\{a,b\}^*$ be a substitution given by
\[a\mapsto abbbba,\ b\mapsto aa\]
and let $X$ be the system generated by the unique two-sided fixed point of $\varphi$. Then $X$ has $\Z_2$ as the $\MEF$ and all the fibres of a factor map $\pi\colon X\to \Z_2$ are uncountable, that is, $\pi^{-1}(z)\subset X$ is uncountable for any $z\in\Z_2$.    
\end{introtheorem}

In fact, we believe that this phenomenon points towards a genuine dynamical characterisation of automaticity for minimal substitutive systems. More precisely, we propose the following conjecture; see Section~3 for further motivation and discussion. For simplicity we state it only in the two-sided case.

  \begin{introconjecture}\label{con:dynamic_char}
Let $\varphi$ be a primitive substitution with an admissible  fixed point $x$. Assume that the dominant eigenvalue of $M_{\varphi}$ is an integer $k$ and that $X=\overline{\mathcal{O}(x)}$ is infinite.
	The following two conditions are equivalent:
\begin{enumerate}
\item\label{con:dynamic_char1} $x$ is $k$-automatic,
\item\label{con:dynamic_char2} $\Z_k$ is a factor of $X$ and all the fibres of a factor map $\pi\colon X\to\Z_k$ are finite; i.e.\ $\pi^{-1}(z)\subset X$ is finite for all $z\in \Z_k$.
\end{enumerate}
\end{introconjecture}
We do not know whether the assumption that the dominant eigenvalue of $M_{\varphi}$ is an integer is necessary in Conjecture \ref{con:dynamic_char}.

\begin{introquestion}\label{que:dynamic_char}
Let $\varphi$ be a primitive substitution with an admissible  fixed point $x$. Assume $X=\overline{\mathcal{O}(x)}$ is infinite.
	Are the following two conditions equivalent:
\begin{enumerate}
\item\label{que:dynamic_char1} $x$ is automatic,
\item\label{que:dynamic_char2} there exists $k\geq 2$ such that $\Z_k$ is a factor of $X$ and all the fibres of a factor map $\pi\colon X\to\Z_k$ are finite?
\end{enumerate}
\end{introquestion}
	
	In other words, Question \ref{que:dynamic_char} (versus Conjecture \ref{con:dynamic_char}) asks if there exists a primitive substitution $\varphi$ whose dominant eigenvalue is \emph{not} an integer, yet $X_{\varphi}$ has everywhere finite-to-one factor map to $\Z_k$ for some $k\geq 2$.
Instead of requiring all fibres to be finite, one could equivalently stipulate in~\eqref{con:dynamic_char2} that the cardinalities of the fibres are uniformly bounded; i.e.,  $\abs{\pi^{-1}(z)}<K$ for all $z\in\Z_k$ for some independent constant $K>0$. A stronger version of Conjecture \ref{con:dynamic_char} (or Question \ref{que:dynamic_char}) would ask whether, for a non-automatic fixed point $x$, \emph{all} fibres of the factor map $\pi\colon X\to\Z_k$ are necessarily infinite.

To place Conjecture~\ref{con:dynamic_char} and Question \ref{que:dynamic_char} in a broader context, we note that the structure of fibres over the $\MEF$ is by now quite well understood for minimal constant-length  and  Pisot-type symbolic substitution systems (in the latter case, modulo the well-known Pisot Substitution Conjecture). It is also quite well understood when a general minimal substitution system is weakly mixing, that is, when its $\MEF$ is trivial. Outside of these settings, however, very little seems to be known about the (generic) behaviour of fibres of the $\MEF$. Focusing first on substitution systems whose $\MEF$ is an odometer appears to be a natural and tractable starting point; a positive answer to Question~\ref{que:dynamic_char} would provide a satisfactory answer in this case.

\subsection*{Acknowledgments} 
We wish to thank Jean-Paul Allouche for his interest in our work and pointing to us some relevant references. 
    The first author was supported by National Science Center, Poland under grant no. UMO2021/41/N/ST1/04295. 
For the purpose of Open Access, the author has applied a CC-BY public copyright licence to any Author Accepted Manuscript (AAM) version arising from this submission. 
    The second author was supported by the FWF (Austrian Science Fund), project F5502-N26, which is a part of the Special Research Program ``Quasi Monte Carlo methods: Theory and Applications'', and by the project ArithRand, which is a joint project between the ANR (Agence Nationale de la Recherche) and the FWF, grant numbers ANR-20-CE91-0006 and I4945-N.

\section{Examples and corollaries}

We say that a substitution  $\varphi\colon \mathcal{A}\to\mathcal{A}^*$ is \textit{left-proper} if all words $\varphi(a)$, $a\in\mathcal{A}$, share the same initial symbol. 
 For a substitutive sequence generated by a \textit{left-proper} substitution the criterion for automaticity takes the following simplified form not requiring the computation of the return words (see the end of Section \ref{sec:proof} for the proof).
  
\begin{corollary}\label{cor:left-proper} Let $\varphi\colon \mathcal{A}\to\mathcal{A}^*$ be a primitive, left-proper substitution,   let $M_{\varphi}$ denote  the incidence matrix of $\varphi$, and let $s$ denote the size of the largest Jordan block of $M_{\varphi}$ corresponding to the eigenvalue $0$. Let $\tau\colon \mathcal{A}\to\mathcal{B}$ be a coding, let $x$ be a one-sided or an admissible two-sided fixed point of $\varphi$, let $y=\tau(x)$ and assume that $y$ is not periodic.  The following conditions are equivalent:\begin{enumerate}
\item\label{cor:proper1} $y$ is automatic;
\item\label{cor:proper2} $^{t}(|\varphi^s(a)|)_{a\in\mathcal{A}}$ is a left eigenvector of $M_{\varphi}$.
\end{enumerate}
\end{corollary}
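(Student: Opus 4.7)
My plan is to invoke Theorem~\ref{cor:automatic} and show that its condition~(ii), phrased in terms of the return substitution of $\varphi$ to $a=x_0$, is equivalent to condition~(ii) of the corollary, phrased directly in terms of $\varphi$, by exploiting the rigidity forced by left-properness.

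\emph{Setup.} Let $c$ denote the common initial letter of the words $\varphi(b)$, $b\in\mathcal{A}$. Since $x$ is a fixed point of $\varphi$, we have $x_0 = c = a$. In the factorisation $x=\varphi(x_0)\varphi(x_1)\cdots$ every block starts with $c$, so $\varphi(b)\cdot c\in\mathrm{L}(x)$ for every $b\in\mathcal{A}$, and $\varphi(b)$ admits a unique decomposition
\[\varphi(b)=r_{b,1}r_{b,2}\cdots r_{b,k_b},\qquad k_b=|\varphi(b)|_a,\ r_{b,i}\in\mathrm{R}_a,\]
with every return word arising as some $r_{b,i}$. Denote the return substitution by $\sigma\colon\mathrm{R}_a\to\mathrm{R}_a^*$ and let $C$ be the $|\mathcal{A}|\times|\mathrm{R}_a|$ matrix with $C_{b,r}=|r|_b$. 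Computing the letter vector of $\varphi(r)$ either directly or by first splitting $\varphi(r)$ into return words and then counting letters yields the intertwining relation $M_\varphi C=CM_\sigma$.

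\emph{Equivalence of the two conditions.} Writing the length vectors $L^{(n)}=(|\varphi^n(b)|)_{b\in\mathcal{A}}$ and $K^{(n)}=(|\varphi^n(r)|)_{r\in\mathrm{R}_a}$ as rows, we have $K^{(n)}=L^{(n)}C$, $L^{(n+1)}=L^{(n)}M_\varphi$, and $K^{(n+1)}=K^{(n)}M_\sigma$. Let $s_\varphi$ and $s_\sigma$ be the Jordan sizes of $M_\varphi$ and $M_\sigma$ at eigenvalue $0$. If corollary~(ii) holds, $L^{(s_\varphi)}M_\varphi=\lambda L^{(s_\varphi)}$, then right-multiplying by $C$ and using the intertwiner gives $K^{(s_\varphi)}M_\sigma=\lambda K^{(s_\varphi)}$. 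Conversely, if theorem~(ii) gives $|\varphi^{s_\sigma+1}(r)|=\mu|\varphi^{s_\sigma}(r)|$ for all $r$, then using $\varphi(b)=\prod_ir_{b,i}$ one computes
\[|\varphi^{s_\sigma+2}(b)|=\sum_i|\varphi^{s_\sigma+1}(r_{b,i})|=\mu\sum_i|\varphi^{s_\sigma}(r_{b,i})|=\mu|\varphi^{s_\sigma+1}(b)|,\]
so that $L^{(s_\sigma+1)}M_\varphi=\mu L^{(s_\sigma+1)}$.

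\emph{Main obstacle: matching Jordan heights.} Each direction above delivers the eigenvector property at the ``wrong'' height (at $s_\varphi$ rather than $s_\sigma$ in the first, and at $s_\sigma+1$ rather than $s_\varphi$ in the second), and because $C$ is not invertible in general, $s_\varphi$ and $s_\sigma$ may genuinely differ. The transfer rests on the following linear-algebra lemma: if $M$ is a square matrix with Jordan size $s$ at $0$, if $v$ is a row vector in the image of right-multiplication by $M^s$ (equivalently, in the sum of generalised left eigenspaces of $M$ for nonzero eigenvalues), and if $vM^k(M-\lambda I)=0$ for some $k\geq 0$ and $\lambda\ne 0$, then $v(M-\lambda I)=0$; this holds because the image in question is $M$-invariant, hence $(M-\lambda I)$-invariant, while $M^k$ acts injectively on it. Strict positivity of all entries of $L^{(n)}$ and $K^{(n)}$ forces $\lambda,\mu>0$, so the lemma applies in both directions to transfer the eigenvector property to the required height, completing the equivalence and, via Theorem~\ref{cor:automatic}, the corollary.
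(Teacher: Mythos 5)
Your proof is correct and follows essentially the same route as the paper: the hard direction applies Theorem~\ref{cor:automatic}, transfers the length-eigenvector condition from the return substitution to $M_\varphi$ via the $\mathrm{R}_a$-factorisation of each $\varphi(b)$ guaranteed by left-properness, and adjusts the Jordan height with a lemma that is exactly the paper's Lemma~\ref{lem:matrix} in different clothing. The only cosmetic differences are that you package the transfer as the intertwining relation $M_\varphi C = CM_\sigma$ and obtain the easy direction by pushing the eigenvector condition forward to the return substitution and citing Theorem~\ref{cor:automatic}, whereas the paper gets that direction directly from Theorem~\ref{thm:dekking} applied with $W=\mathcal{A}$.
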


We should note that any system $X$ generated by an admissible fixed point of a primitive substitution can be, in fact, obtained from some primitive and \textit{left-proper} substitution by an  algorithmic procedure \cite[Prop.\ 31]{D-2000}. This process consists of two steps: first, one computes the set $\mathrm{R}_a$ of return words to $a=x_0$ and the return substitution $\tau\colon\mathrm{R}_a\to\mathrm{R}_a^*$; second, one considers the alphabet $\mathcal{B}=\{(w,i)\mid w\in \mathrm{R}_a, 0\leq i<|\varphi(w)|\}$ and defines a new substitution $\zeta\colon \mathcal{B}\to\mathcal{B}^*$, which is left-proper and gives rise to the system conjugate with $X$ (see \cite{D-2000}  for the definition of $\zeta$).  In view of this, Corollary \ref{cor:left-proper} can also be treated as an (algorithmic) solution to Problem \ref{prob}. However, the second step in this 'properisation'   process  greatly increases the size of the matrix, and, in view of Theorem \ref{thm:automatic} is not necessary for the solution of Problem \ref{prob}. We illustrate Corollary \ref{cor:left-proper} with the following examples from the literature.

\begin{example}\label{ex:gaps}\cite{Spieg-21}  Let $\mathcal{A}=\{a,\overline{a},b,c\}$, let $\psi\colon \mathcal{A}\to\mathcal{A}^*$ be a  substitution given by 
\[a\mapsto a\overline{a}bc, \ \overline{a}\mapsto a\overline{a}cb, \ b\mapsto a\overline{a}bcb,\  c\mapsto a\overline{a}c,\]
and let $\rho\colon \mathcal{A}\to \{2,3,4\}$ be the coding given by 
\[a\mapsto 3, \ \overline{a}\mapsto 3, \ b\mapsto 4, \ c\mapsto 2.\]
Let $B=33423\dots$ be the coding by $\rho$ of the (unique) one-sided fixed point of $\psi$. The sequence  $B$ encodes the differences of the consecutive occurrences of the word $01$ in the famous Thue--Morse sequence \cite[Lem.\ 3]{Spieg-21}  (note that we have taken here the second power of the substitution considered in  \cite{Spieg-21},   so that $\psi$ is left-proper). Sequence $B$ has been recently analysed  by Spiegelhofer, who showed (among other things) that $B$ is not automatic using the kernel-based characterisation of automaticity  \cite[Thm.\ 1]{Spieg-21}.   We will show that $B$ is not automatic using Corollary \ref{cor:left-proper}. It is easy to see that $B$ is not periodic. The eigenvalues of the incidence matrix $M_{\psi}$ are given by 4,1,0,0, and $M_{\psi}$ has two simple Jordan blocks corresponding to the eigenvalue 0, so  $s=1$. Since $\psi$ is primitive and left-proper, and
\[^{t}(|\psi(a)|)_{a\in\mathcal{A}}M_{\psi}=(16,16,21,11)\neq (16,16,20,12)={}^{t}(|\psi(a)|)_{a\in\mathcal{A}}\cdot 4,\]
the sequence $B$ is not automatic.
\end{example}

\begin{corollary}\label{cor:singular} Let $\varphi\colon\mathcal{A}\to\mathcal{A}^*$ be a left-proper, primitive substitution and assume that the incidence matrix $M_{\varphi}$ is not singular. Then, a nonperiodic (one-sided or admissible two-sided) fixed point of $\varphi$ is automatic if and only if the substitution $\varphi$ is of constant length.
\end{corollary}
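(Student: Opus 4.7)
The plan is to deduce Corollary \ref{cor:singular} as an immediate specialisation of Corollary \ref{cor:left-proper} to the case where the coding is the identity $\tau\colon\mathcal{A}\to\mathcal{A}$ (so that $y=x$) and the incidence matrix is nonsingular. First I would note that since $M_{\varphi}$ has no zero eigenvalue, the parameter $s$ appearing in Corollary \ref{cor:left-proper} equals $0$. Hence condition \eqref{cor:proper2} of that corollary simplifies to the requirement that ${}^{t}(|\varphi^{0}(a)|)_{a\in\mathcal{A}}={}^{t}(1,1,\dots,1)$ be a left eigenvector of $M_{\varphi}$.

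Next I would compute ${}^{t}(1,\dots,1)\,M_{\varphi}$ directly from the definition of the incidence matrix. The $b$-th entry of this product equals the $b$-th column sum of $M_{\varphi}$, namely
\[
\sum_{a\in\mathcal{A}}|\varphi(b)|_{a}=|\varphi(b)|.
\]
Therefore ${}^{t}(1,\dots,1)\,M_{\varphi}=(|\varphi(b)|)_{b\in\mathcal{A}}$, and this vector is a scalar multiple of ${}^{t}(1,\dots,1)$ if and only if all the lengths $|\varphi(b)|$ coincide, i.e.\ $\varphi$ is of constant length. (In that case the corresponding eigenvalue is automatically the Perron eigenvalue of $M_{\varphi}$.)

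Combining these two observations with Corollary \ref{cor:left-proper} yields the equivalence: the nonperiodic one-sided fixed point $x$ of $\varphi$ is automatic if and only if $\varphi$ is of constant length. The ``if'' direction is in any case classical (every fixed point of a constant-length substitution is automatic by definition), so the real content is the ``only if'' direction, which is supplied by Corollary \ref{cor:left-proper}. I do not foresee any technical obstacle beyond verifying the two short linear-algebra computations above; the whole argument is a one-line deduction once the correct vector and the value $s=0$ are identified.
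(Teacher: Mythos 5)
Your proposal is correct and follows essentially the same route as the paper: apply Corollary \ref{cor:left-proper} with the identity coding, observe that nonsingularity of $M_{\varphi}$ forces $s=0$, and note that $(1,\dots,1)M_{\varphi}=(|\varphi(b)|)_{b\in\mathcal{A}}$ is a multiple of $(1,\dots,1)$ precisely when $\varphi$ has constant length. The paper's own proof is just a terser version of this same computation.
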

\begin{proof}    By Corollary \ref{cor:left-proper}, $x$ is automatic if and only if the horizontal vector consisting of 1's is a left eigenvector of $M_{\varphi}$  (since $s=0$ in this case), which happens if and only if $\varphi$ is of constant length.
\end{proof}

\begin{example}\label{ex:kolam}\cite{AAS-06} Let  $x=GDDGG\dots$ be the fixed point of the  substitution
\[\lambda(G)=GDD,\ \lambda(D)=G.\]
and note that $x$ is nonperiodic. The sequence $x$  occurs in the drawings of the classical Indian kolam and has been analysed in \cite{AAS-06}, where the authors showed (among other things) that $x$ is not automatic using the kernel-based characterisation of automaticity \cite[Thm.\  3.1]{AAS-06}.  Since $\det(M_{\lambda})=-2$ and $\lambda$ is left-proper, primitive and not of constant length it follows immediately from Corollary \ref{cor:singular} that $x$ is not automatic.
\end{example}

For general nonproper  primitive substitutions, there is no criterion for automaticity that depends only on the incidence matrix of the substitution as the following example shows.

\begin{example}\label{ex:matrix} Let $\varphi\colon \{a,b,c\}\to\{a,b,c\}^*$ be the substitution considered in Example \ref{ex:return}, i.e.\
\[a\mapsto aca,\ b\mapsto bca, c\mapsto cbcac,\]
and recall that the fixed point $acabacabacababac\ldots$ of $\varphi$  is $2$-automatic.  Now, consider the (unique) fixed point $x$ of the substitution $\varphi'\colon \{a,b,c\}\to\{a,b,c\}^*$ given by
\[a\mapsto aca,\ b\mapsto acb, c\mapsto abccc,\]
which has the same incidence  matrix as $\varphi$. Then, $x$ is not periodic and, by Corollary \ref{cor:singular}, is not automatic, since $\varphi'$ is left-proper and $M_{\varphi'}$ is not singular.
\end{example}

Let $\varphi\colon\mathcal{A}\to\mathcal{A}^*$ be a (left-proper) substitution and let $s$ be the size of the largest Jordan block of $M_{\varphi}$ corresponding to the eigenvalue $0$. Note that if $^{t}(|\varphi^s(a)|)_{a\in\mathcal{A}}$ is a left eigenvector of $M_{\varphi}$, then ${}^{t}(|\varphi^n(a)|)_{a\in\mathcal{A}}$ is a left eigenvector of $M_{\varphi}$ for any $n\geq s$. It can happen that $^{t}(|\varphi^n(a)|)_{a\in\mathcal{A}}$ is a left eigenvector of $M_{\varphi}$ for some $n<s$, consider e.g.\ the substitution
\[0\mapsto 010, \quad 1\mapsto 001,\] which is  of constant length (and so $n=0$ works) and for which $s=1$.  Nevertheless, the following example shows that, in general, the integer $s$ in Corollary \ref{cor:left-proper} is optimal.

\begin{example}\label{ex:optimal} Let $\varphi\colon \mathcal{A}\to\mathcal{A}^*$ be any left-proper (primitive) substitution on the three letter alphabet  with the incidence matrix 
\[M_{\varphi}=\begin{pmatrix}
4 & 3& 1\\
4 & 1 & 3\\
4 &1 &3
\end{pmatrix}\] such that the (unique) one-sided fixed point of $\varphi$ is not periodic, e.g.\ let $\varphi$ be given by the formula
\[a\mapsto aaaabbbbcccc,\quad b\mapsto abcaa, \quad c\mapsto abbbccc.\] 
It is easy to check that the fixed point $x=aaaab\dots$ of $\varphi$ is not periodic. The eigenvalues of $M_{\varphi}$ are 8 and 0, and $M_{\varphi}$ has a Jordan block of size 2 corresponding to the eigenvalue 0.  The vector $^{t}(|\varphi^2(a)|)_{a\in\mathcal{A}}=(94, 48, 48)$ is a left eigenvector of $M_{\varphi}$ (corresponding to the eigenvalue 8) and, by Corollary \ref{cor:left-proper}, $x$ is automatic. However, neither $(1,1,1)$ nor $^{t}(|\varphi(a)|)_{a\in\mathcal{A}}=(12,5,7)$ is a left eigenvector of $M_{\varphi}$.
\end{example}

Recall that a complex number $\lambda$ is a (topological) dynamical eigenvalue of a subshift $X$ if there exists a continuous function $f\colon X\to\mathbf{C}$ such that $f\circ T= \lambda f$, where $T$ denotes the shift map. Thanks to the earlier work of Dekking~\cite{DK-77} and the recent work of the second author and Yassawi~\cite{MY-21}, the dynamical eigenvalues of minimal automatic systems are well understood.  For an infinite minimal $k$-automatic system $X$ its eigenvalues are given by $k^n$-th roots of unity, $n\geq 1$ and $h$-th roots of unity, where $h$ is an integer coprime with $k$ known as the \textit{height} of $X$. This is the same as saying that the additive group $\Z_k\times \Z/h\Z $ is the maximal equicontinuous factor of $X$, where $\Z_k$ is the ring of $k$-adic integers (we refer to ~\cite{MY-21} for more details). One may often show that a given (minimal) substitutive system is not automatic by computing its (dynamical) eigenvalues as described e.g.\ in \cite[Sec.\ 10]{ASY-21}.  However, there exist minimal substitutive systems that have $\Z_k$ as the maximal equicontinuous factor and are not automatic.

\begin{example}\label{ex:spectral} Let $\varphi\colon \{a,b\}\to\{a,b\}^*$ be a primitive substitution given by
\[a\mapsto abbbba,\ b\mapsto aa,\]
let $x=abbbbaaa\dots$ be the (unique) one-sided fixed point of $\varphi$, and let $X$ be the orbit closure of $x$. We will compute the dynamical eigenvalues of $X$. It follows from \cite[Cor.\ 1]{FMN-96}, that a  system given by a primitive substitution, whose incidence matrix has only integer eigenvalues, cannot have irrational dynamical eigenvalues (i.e.\ eigenvalues $e^{2\pi i\alpha}$ with $\alpha\notin \Q$). Since the eigenvalues of $M_{\varphi}$ are given by $-2$, and $4$, $X$ has no irrational (dynamical) eigenvalues. On the other hand, by  \cite[Prop.\ 2]{FMN-96}  (see also \cite[Lem.\ 28]{D-2000}), $e^{ \frac{2\pi ip}{q}}$ is an eigenvalue of $X$ for some $p\in \Z$, $q\geq 1$ if and only if $q$ divides both $|\varphi^n(a)|$ and $|\varphi^n(b)|$ for some $n\geq 0$. An easy computation shows that
\begin{equation*}
|\varphi^n(a)|=-\frac{1}{3}(-2)^n + \frac{4}{3}\cdot 4^n, \quad
|\varphi^n(b)|=\frac{1}{3}(-2)^n + \frac{2}{3}\cdot 4^n,
\end{equation*}
for all $n\geq 0$. Hence, $e^{\frac{2\pi i k}{2^m}}$ is an eigenvalue of $X$ for all $k\in \Z$ and $m\geq 1$. It is easy to see that  no prime $p$ other than 2 divides $\gcd(|\varphi^n(a)|,|\varphi^n(b)|)$  for any $n\geq 0$ (since $M_{\varphi}$ is invertible modulo $p$ and 
$(1,1)M^n_{\varphi}=(|\varphi^n(a)|,|\varphi^n(b)|)$ for all $n\geq 0$). Thus, dynamical eigenvalues of $X$ are given exactly by $2^m$-th roots of unity, $m\geq 1$, or, equivalently, $\Z_2$ is the maximal equicontinuous factor of $X$. Nevertheless, by Corollary \ref{cor:singular}, $x$ is not automatic, since $\varphi$ is left-proper, $M_{\varphi}$ is not singular, and $\varphi$ is not of constant length.
\end{example}

In Section \ref{sec:spectral} we will continue to study properties of the substitution $\varphi$ from Example \ref{ex:spectral} and prove Theorem \ref{intothm:infinitefibres} from the introduction.

\section{Proof of Theorem  \ref{thm:automatic} }\label{sec:proof}

The following section is devoted to the proof of Theorem \ref{thm:automatic}. First, we fix our notation and recall some standard notions.

\subsection*{Words and sequences}
 Let $\mathcal{A}$ be a finite set (called an \emph{alphabet}).  We let $\mathcal{A}^*$ denote the set of finite words over $\mathcal{A}$, and  $\mathcal{A}^+$  the set of nonempty finite words over $\mathcal{A}$.   We let $\mathcal{A}^{\mathbf{N}}$ denote the set of infinite sequences over $\mathcal{A}$, where $\mathbf{N}=\{0,1,2,\ldots\}$ stands for the set of nonnegative integers, and $\mathcal{A}^{\mathbf{Z}}$ the set of biinfinite (or two-sided) sequences.  For a word $u\in\mathcal{A}^*$, we let $|u|$ denote the length of $u$. All finite words  are indexed starting at $0$. For a sequence or a finite word $x$  and integers $i\leq j$ we write  $x_{[i,\,j)}$ for the word $x_ix_{i+1}\cdots x_{j-1}$, $x_{[i,\,\infty)}$ for the infinite sequence $x_ix_{i+1}\cdots$ and $x_{(-\infty,i]}$ for the left-infinite sequence $\ldots x_{i-1}x_i$, when these make sense. We say that a word $u$  \textit{appears} in $x$ at position $i$ if  $u=x_{[i,j)}$ for some $j$.

\subsection*{Substitutive sequences} Let $\mathcal{A}$ and $\mathcal{B}$ be alphabets. A \textit{morphism} is a map $\varphi\colon \mathcal{A}\rightarrow \mathcal{B}^*$ that assigns to each letter $a\in\mathcal{A}$ some finite word $w$ in $\mathcal{B}^*$. A morphism $\varphi$ is \textit{nonerasing} if $|\varphi(a)|\geq 1$ for all $a\in\mathcal{A}$. A morphism  $\varphi$ is \textit{of constant length} $k$ if $|\varphi(a)|=k$ for each $a\in \mathcal{A}$.  A \textit{coding} is a morphism of constant length 1, i.e.\ an arbitrary map $\tau\colon \mathcal{A}\rightarrow \mathcal{B}$. If $\mathcal{A}=\mathcal{B}$, we refer to any morphism  $\varphi$ as \textit{substitution}. A morphism $\varphi\colon \mathcal{A}\rightarrow \mathcal{B}^*$ induces natural maps $\varphi\colon \mathcal{A}^{\mathbf{N}}\to \mathcal{B}^{\mathbf{N}}$ and $\varphi\colon \mathcal{A}^{\mathbf{Z}}\to \mathcal{B}^{\mathbf{Z}}$; in the latter case, the map is given by the formula
\[\varphi(\ldots x_{-1}.x_{0}\ldots)=\ldots \varphi(x_{-1}).\varphi(x_0)\ldots,\] where the dot indicates the 0th position.  For a substitution $\varphi\colon \mathcal{A}\rightarrow \mathcal{A}^*$, a sequence $x$ in $\mathcal{A}^{\mathbf{N}}$ or in $\mathcal{A}^{\mathbf{Z}}$ is called a \textit{fixed point of} $\varphi$ if $\varphi(x)=x$. A two sided fixed point $x$ is said to be \textit{admissible} if the word $x_{-1}x_0$ appears in $\varphi^n(a)$ for some $a\in\mathcal{A}$ and $n\geq 1$.

Let $k\geq 2$. A (one-sided or two-sided) fixed point of a substitution (resp.\  substitution of constant length $k$) is called a \textit{purely substitutive} (resp.\  \textit{purely}  $k$-\textit{automatic}) sequence. A sequence is called \textit{substitutive} (resp.\ $k$-\textit{automatic}) if it can be obtained as the image of a purely substitutive (resp.\ purely $k$-automatic) sequence under a coding.  It is easy to see that a two-sided sequence $(x_n)_{n\in\Z}$ is substitutive (resp.\ $k$-automatic) if and only if the one-sided sequences $(z_n)_{n\geq 0}$ and $(z_{-n})_{n<0}$  are substitutive (resp.\ $k$-automatic); this follows e.g.\ from (the proof of) \cite[Lemma 2.10]{BKK}. For all $n\geq 1$, a sequence $x$ is $k$-automatic if and only if it is $k^n$-automatic \cite[Theorem 6.6.4]{AlloucheShallit-book}, and that  all periodic  sequences are $k$-automatic with respect to any $k\geq 2$ \cite[Thm.\ 5.4.2]{AlloucheShallit-book}.

\subsection*{Substitutive systems} Let $\mathcal{A}$ be an alphabet. The set $\mathcal{A}^{\mathbf{Z}}$  with the product topology (where we use the discrete topology on each copy of $\mathcal{A}$) is a compact metrisable space. We define the shift map $T\colon \mathcal{A}^{\mathbf{Z}}\rightarrow \mathcal{A}^{\mathbf{Z}}$ by $T((x_n)_n)= (x_{n+1})_n$. A set $X\subset \mathcal{A}^{\mathbf{Z}}$ is called a \textit{subshift} if $X$ is closed and $T(X)\subset X$.  We let $\mathrm{L}(X)$  denote the \textit{language} of the subshift $X$, i.e.\ the set of all finite words which appear in some $x\in X$, and  $\mathrm{L}^r(X)$ denote the set of words of length $r$ which belong to the language of $X$.  We also use $\mathrm{L}(x)$ (resp.\ $\mathrm{L}^r(x)$) to denote the set of words (resp.\  set of words of length $r$) which appear in a sequence $x$. A nonempty subshift $X$ is \textit{minimal} if it does not contain any  subshifts other than $\emptyset$ and $X$. Equivalently, $X$ is minimal if and only if each point $x\in X$ has a dense orbit in $X$, and if and only if each sequence $x\in X$ is  \emph{uniformly recurrent}, i.e.\ every word that appears in $x$ does so with bounded gaps.  A subshift $Y$ is a \textit{(topological) factor} of the subshift $X$ if there exists a continuous surjective map $\pi\colon X\rightarrow Y$, which commutes with the shift map $T$. Such a map $\pi$ is called \textit{a factor map}. Two subshifts $X$ and $Y$ are \textit{conjugate} (or \textit{isomorphic}) if there exists a homeomorphism $\pi\colon X\rightarrow Y$, which commutes with $T$. In what follows, we will also work with one-sided subshifts $X\subset \mathcal{A}^{\N}$; all  definitions can be adapted to this setting in a straightforward way.

With every one-sided or two-sided substitutive sequence $x$, we associate a subshift given by the orbit closure of $x$. Formally,  a system $X\subseteq \mathcal{A}^{\Z}$ is called \emph{substitutive} (resp.\ \emph{purely substitutive}, $k$-\emph{automatic}, \emph{purely} $k$-\emph{automatic}) if there exists a  substitutive (resp.\ purely substitutive, $k$-automatic, purely $k$-automatic) sequence $x\in\mathcal{A}^{\Z}$ such that $X=\overline{\mathcal{O}(x)}=\overline{\{T^n(x)\mid n\in \Z\}}$. Similarly, a system $X\subseteq \mathcal{A}^{\N}$ is called \emph{substitutive} (resp.\ \emph{purely substitutive}, $k$-\emph{automatic}, \emph{purely} $k$-\emph{automatic}) if there exists a purely substitutive (resp.\ substitutive, $k$-automatic, purely $k$-automatic) sequence $x\in\mathcal{A}^{\N}$ such that $X=\overline{\mathcal{O}^{+}(x)}=\overline{\{T^n(x)\mid n\in \N\}}$.

In this paper, we will be mostly interested in \textit{minimal} substitutive systems. With every substitution $\varphi\colon \mathcal{A}\to\mathcal{A}^*$ one can associate its \textit{incidence matrix}  indexed by $\mathcal{A}$ and defined as $M_{\varphi}= (|\varphi(b)|_a)_{a,b\in \mathcal{A}}$, where $|\varphi(b)|_a$ denotes the number of occurrences of the letter $a$ in $\varphi(b)$. The  equation $M_{\varphi^{n}}=M^n_{\varphi}$ is satisfied for all $n\geq 1$. The substitution $\varphi$ is called \textit{primitive} if the matrix $M_{\varphi}$ is primitive, i.e.\ there exists $n\geq 1$ such that all entries of $M_{\varphi}^n$ are strictly positive. If $\varphi$ is primitive, then there exists some power $\varphi^n$ of $\varphi$ (with $n<|\mathcal{A}|^2)$ such that $\varphi^n$ admits an admissible two-sided fixed point (and, hence, also a one-sided fixed point). Thus, without loss of generality, we may assume that all primitive substitutions $\varphi$ admit at least one admissible fixed-point $x$. In this case the subshift generated by $x$ is minimal \cite[Prop.\ 5.5]{Queffelec-book}.

\subsection*{Factorisations} Let $\mathcal{A}$ be an alphabet and let  $W\subset \mathcal{A}^+$. A word, one-sided sequence or two-sided sequence $x$ is \textit{factorizable} over $W$ if $x$ can be written as a concatenation of words in $W$. In this case, a $W$-factorisation of a one-sided sequence or a word $x$ is, respectively, a one-sided sequence or a word $F_W(x)$ over $W$ such that $x=\prod_{i} (F_W(x))_i$ (here, the product means the concatenation of words). A  $W$-factorisation of a two-sided sequence  $x$ is a two-sided sequence  $F_W(x)=(w_i)_{i\in\Z}$ over $W$ such that $x=\prod_{i} (F_W(x))_i$, and $w_0=x_{[n,n+|w_0|)}$, $w_{-1}=x_{[n-|w_{-1}|,n)}$ for some $n\geq 0$, $n-|w_{-1}|<0$; if $n=0$, we say that the $W$-factorisation $F_W(x)$ is \textit{centred}.

 Let $\varphi\colon\mathcal{A}\to\mathcal{A}^+$ be a  substitution and let $W\subset \mathcal{A}^+$ be a \emph{code} i.e.\ a finite set of words for which the factorisation over 
$W$ is unique whenever it exists.
 We say that $W$ is \textit{compatible} with $\varphi$ if for each $w\in W$, $\varphi(w)$ is factorizable over $W$. If $W$ is compatible with $\varphi$ and $x$ is a fixed point of $\varphi$ admitting some (centred in the two-sided case) $W$-factorisation $F_W(x)=\prod_i w_i$, then for each $w\in W$, there exists a unique factorisation $F_W(\varphi(w))$ such that $F_W(x)=\prod_{i} F_W(\varphi(w_i))$, and we may define the  substitution $\tau\colon W\to W^*$ by $\tau(w)=F_W(\varphi(w))$. In general, the substitution $\tau$ depends  on the choice of the $W$-factorisation $F_W(x)$ of $x$ if $x$ admits more than one $W$-factorisation. We say that $\tau$ is the substitution \textit{induced} by the $W$-factorisation $F_W(x)$. Note that $\tau(F_W(x))=F_W(\varphi(x))$, $\tau$ is primitive whenever $\varphi$ is primitive, and $\tau^n$ corresponds to the substitution $\varphi^n$ (keeping the same $W$-factorisation $F_W(x)$ of $x$). Furthermore, we have 
${}^{t}(|\varphi^n(w)|)_{w\in W}={}^{t}(|w|)_{w\in W}M^n_{\tau}$ for each $n\geq 0$.
\begin{enumerate}
\item (\textbf{Trivial factorisation.}) Let $\varphi\colon\mathcal{A}\to\mathcal{A}^+$ be a substitution, let $x$ be a fixed point of $\varphi$, and let $W=\mathcal{A}$. Then $x$ admits a (trivial and obviously unique) factorisation over $\mathcal{A}$ and $\tau=\varphi$.
\item (\textbf{Return words.}) Let $x$ be a (one-sided or two-sided) sequence over $\mathcal{A}$, and let $a\in \mathcal{A}$.  A word $w\in\mathrm{L}(x)$ is called \textit{a return word} to $a$ (in $x$) if  $w$ starts with $a$, $w$ has exactly one occurrence of $a$, and $wa\in\mathrm{L}(x)$.  Let $\mathrm{R}_a$ be the set of return words to $a$ in $x$; the set $\mathrm{R}_a$ is a $\Z$-\textit{code}, i.e.\ any two-sided sequence which is factorizable over $\mathrm{R}_a$ has exactly one $\mathrm{R}_a$-factorisation. This implies that any word or one-sided sequence factorizable over $\mathrm{R}_a$ has a unique $\mathrm{R}_a$-factorisation. If $x$ is uniformly recurrent,  then any word in $x$ appears in $x$ with bounded gaps and the set $\mathrm{R}_a$ is finite. If $\varphi\colon\mathcal{A}\to\mathcal{A}^*$ is a primitive substitution and $x$ is an admissible fixed point of $\varphi$ with $x_0=a$, then the set $\mathrm{R}_a$ of return words to $a$ in $x$ is finite and compatible with $\varphi$, and the (unique) substitution $\tau\colon \mathrm{R}_a\to \mathrm{R}_a^*$ is called the \textit{return substitution} of $\varphi$ to $a$. Furthermore for a two-sided $x$, its $\mathrm{R}_a$-factorisation is centred. This construction can be, in fact, carried out for any word $u=x_{[0,t]}$, but we will not need it, see e.g.\ \cite[Sec.\ 3.1]{DL-2018} for details.
\end{enumerate}

\begin{remark}\label{rem:return} Given a primitive substitution $\varphi\colon\mathcal{A}\to\mathcal{A}^*$ with a one-sided fixed point $x$, the set of the return words $\mathrm{R}_a$ to $x_0=a$ is easily computable \cite[Lem.\ 4]{Dur-periodicity}. For completeness we recall the details. Let $w_1$ be the first return word to $a$ which appears in $x$ (which is a prefix of $x$ and always appears in $\varphi^{|\mathcal{A}|}(a)$). The word $\varphi(w_1)$ is then uniquely factorizable over $\mathrm{R}_a$, we let $w_2$ be the first return word in $\varphi(w_1)$ which is different than $w_1$ (if it exists).  The word $\varphi(w_2)$ is then uniquely factorizable over $\mathrm{R}_a$, and we let $w_3$ be the first return word in $\varphi(w_1w_2)$, which is different than $w_1$ and $w_2$ (if it exists). We continue in this way until we get a return word $w_n$ such that the (unique) $\mathrm{R}_a$-factorisation of $\varphi(w_1 \ldots w_n)$ consists only of words contained in $\{w_1,\dots, w_n\}$.   Since the set of return words $\mathrm{R}_a$ is finite, this process will stop after a finite number  of steps $n$; in fact, it is not hard to see that $n \leq 2d^2|\varphi|^d$, where $d=|\mathcal{A}|$ and $|\varphi|=\max_a |\varphi(a)|$ (although this bound is probably far from optimal). It is easy to see that the words $w_1,\dots, w_n$ comprise the whole set $\mathrm{R}_a$.
\end{remark}

\subsection*{A sufficient condition} The first step in the proof of Theorem \ref{thm:automatic} is the following result, which generalizes Dekking's theorem \cite[Sec.\ 5, Thm.\ 1]{DK-77}; Dekking's original result corresponds to the factorisation over the letters, i.e.\ when $W=\mathcal{A}$.  The proof is virtually the same, and we include it for completeness.
\begin{theorem}\label{thm:dekking} Let $\varphi\colon\mathcal{A}\to\mathcal{A}^+$ be a substitution and let $x$ be a  (one-sided or two-sided) fixed point of $\varphi$.  Let $W\subset \mathcal{A}^+$ be a code, let  $F_W(x)$ be a (centred in the two-sided case) $W$-factorisation of $x$ and assume $W$ is compatible with $\varphi$. Let $\tau\colon W\to W^*$ be the substitution induced by the $W$-factorisation $F_W(x)$. If the vector $^{t}(|\varphi^n(w)|)_{w\in W}$ is a left eigenvector of $M_{\tau}$ for some $n\geq 0$, then  $x$ is automatic.
\end{theorem}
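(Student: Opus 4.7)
The plan is to generalise Dekking's original construction: we build an auxiliary constant-length substitution $\zeta$ on a refined alphabet $\mathcal{B}$ together with a coding $\pi\colon\mathcal{B}\to\mathcal{A}$ so that $x=\pi(z)$ for an appropriate fixed point $z$ of $\zeta$.

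First, using the identity ${}^{t}(|\varphi^n(w)|)_{w\in W}={}^{t}(|w|)_{w\in W}M^n_{\tau}$ recorded just before the theorem, the eigenvector hypothesis (with eigenvalue $k$) rewrites as $|\varphi^{n+1}(w)|=k\,|\varphi^n(w)|$ for every $w\in W$. Iterating gives $|\varphi^{n+m}(w)|=k^m\,|\varphi^n(w)|$ for all $m\geq 0$, and writing $k=p/q$ in lowest terms the divisibility $q^m\mid|\varphi^n(w)|$ for arbitrary $m$ forces $q=1$; hence $k$ is a positive integer. The degenerate case $k=1$, in which the length equation forces $|\varphi(a)|=1$ for every letter $a$ appearing in any $\varphi^n(w)$ and thus makes $x$ a sequence of $\varphi$-fixed letters, is easily handled separately, so we assume $k\geq 2$.

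Set $\mathcal{B}=\{(w,i):w\in W,\,0\leq i<|\varphi^n(w)|\}$, define the coding $\pi(w,i)=\varphi^n(w)_i$, and introduce the lifting $\beta\colon W^*\to\mathcal{B}^*$ by $\beta(w)=(w,0)(w,1)\cdots(w,|\varphi^n(w)|-1)$, extended by concatenation, so that $\pi\circ\beta(w)=\varphi^n(w)$ and $|\beta(w)|=|\varphi^n(w)|$. Define $\zeta\colon\mathcal{B}\to\mathcal{B}^k$ so as to enforce $\zeta\circ\beta=\beta\circ\tau$ on $W$: explicitly, $\zeta(w,i)$ is the length-$k$ subword of $\beta(\tau(w))$ at positions $[ik,(i+1)k)$. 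This is well-defined because $|\beta(\tau(w))|=\sum_q|\varphi^n(\tau(w)_q)|=|\varphi^{n+1}(w)|=k|\varphi^n(w)|$ splits cleanly into $|\varphi^n(w)|$ blocks of length $k$, and an easy induction then gives $\zeta^m\circ\beta=\beta\circ\tau^m$ for every $m\geq 0$.

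In the one-sided case, $\tau(y)=y$ with $y=F_W(x)$ implies that $\tau^m(y_0)$ starts with $y_0$, so $\zeta^m(y_0,0)$ is the length-$k^m$ prefix of $\zeta^m(\beta(y_0))=\beta(\tau^m(y_0))$; the limit $z=\lim_m\zeta^m(y_0,0)\in\mathcal{B}^{\N}$ is thus a fixed point of $\zeta$. Applying $\pi$, the length-$k^m$ prefix of $\pi(z)$ equals $\varphi^n(\tau^m(y_0))=\varphi^{n+m}(y_0)$ (using $\tau^m(y_0)=\varphi^m(y_0)$ as words in $\mathcal{A}^*$), which is a prefix of $x=\prod_j\varphi^{n+m}(y_j)$, so $\pi(z)=x$. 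For two-sided $x$ with centred factorisation, define $z\in\mathcal{B}^{\Z}$ directly by $z_p=(y_j,i)$ whenever position $p$ sits at offset $i$ in the block $\varphi^n(y_j)$ of $x=\prod_{j\in\Z}\varphi^n(y_j)$, and verify $\zeta(z)=z$ by comparing with the $(n+1)$-level decomposition $x=\prod_{j\in\Z}\varphi^{n+1}(y_j)$ via $\zeta\circ\beta=\beta\circ\tau$. Since $\zeta$ has constant length $k\geq 2$, the sequence $z$ is purely $k$-automatic, and hence $x=\pi(z)$ is $k$-automatic. The main technical point is to check that the intertwining relation $\zeta\circ\beta=\beta\circ\tau$ is compatible with the constant-length structure on $\mathcal{B}$—which is precisely what the eigenvalue equation provides—and, in the biinfinite case, the block-level bookkeeping needed to confirm $\zeta(z)=z$.
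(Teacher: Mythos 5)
Your construction is essentially identical to the paper's proof: the same refined alphabet $\mathcal{B}=\{(w,i)\mid w\in W,\ 0\leq i<|\varphi^n(w)|\}$, the same constant-length-$k$ substitution obtained by cutting $\beta(\tau(w))$ (the paper's $\sigma(\varphi^{n+1}(w))$) into $|\varphi^n(w)|$ blocks of length $k$, and the same coding $\pi$, with your divisibility argument for the integrality of $k$ being a harmless variant of the paper's Perron--Frobenius remark. One caveat: your dismissal of the case $k=1$ is not actually justified --- a sequence of $\varphi$-fixed letters need not be automatic (take $\varphi=\mathrm{id}$ and $W=\mathcal{A}$, so that every sequence satisfies the hypotheses), so the statement genuinely degenerates there; the paper's proof silently assumes $k\geq 2$ as well, and the issue is immaterial in the applications, where primitivity and infiniteness of the underlying system force $k\geq 2$.
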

\begin{proof}
Let $k>0$ be the dominant eigenvalue of $M_{\tau}$. Since the eigenvector $^{t}(|\varphi^n(w)|)_{w\in W}$ has positive integer entries, it has to correspond  to the dominant eigenvalue $k$ and $k$ is an integer. Note that
\begin{equation}\label{eq:nice}{}^{t}(|\varphi^{n+1}(w)|)_{w\in W}={}^{t}(|\varphi^n(w)|)_{w\in W}M_{\tau}= {}^{t}(|\varphi^n(w)|)_{w\in W}\cdot k.
\end{equation}
Put $n_w=|\varphi^n(w)|$, $w\in W$ and consider the alphabet $\mathcal{B}=\{(w,i)\mid w\in W, 0\leq i<n_w\}$. Let $\varphi^n(W)=\{\varphi^n(w)\mid w\in W\}$, and let $\sigma\colon \varphi^n(W)\to \mathcal{B}^*$ be the map 
\[\varphi^n(w)\mapsto (w,0)(w,1)\dots(w,n_w-1),\] 
which relabels $\varphi^n(w)$ into $|\varphi^n(w)|$ distinct symbols in $\mathcal{B}$. Note that we may extend $\sigma$ to words $\varphi^m(w)$, $w\in W$, $m\geq n$ or to the fixed point $x$ of $\varphi$ using factorisations (of $\varphi^m(w)$, or $x$, respectively) with respect to the set $\varphi^n(W)$. By \eqref{eq:nice}, for each $w\in W$, we may write $\sigma(\varphi^{n+1}(w))$ as a concatenation of $n_w$ words of length $k$, that is, 
\[\sigma(\varphi^{n+1}(w))=v_0^w\dots v_{n_{w}-1}^w,\] where each $v_i^w$ is a word over $\mathcal{B}$ of length $k$. We now define a substitution $\bar{\varphi}\colon \mathcal{B}\to\mathcal{B}^*$ of constant length $k$ by
\[\bar{\varphi}\colon (w,i)\mapsto v_i^w \quad w\in W,\ 0\leq i<n_w,\] and a coding $\pi\colon\mathcal{B}\to\mathcal{A}$ by
\[\pi\colon (w,i)\mapsto \varphi^n(w)_i\quad w\in W,\ 0\leq i<n_w.\] It is easy to see that $\bar{\varphi}$ is well-defined, $\sigma(x)$ is a fixed point of $\bar{\varphi}$, and $\pi(\sigma(x))=x$. This shows that $x$ is a $k$-automatic sequence.
\end{proof}

\begin{continueexample}{ex:return} Recall that  $\varphi\colon\mathcal{A}\to\mathcal{A}^*$ is given by 
\[a\to aca,\ b\to bca, c\to cbcac,\]
the fixed point $x=acac\dots$ is $4$-automatic, and $\mathrm{R}_a=\{ac, acbc\}$. Using (the proof of) Theorem \ref{thm:dekking} with $W=\mathrm{R}_a$ and $n=0$, we will now write $x$ as a coding of a fixed point of a substitution of constant length.  Following the proof of Theorem \ref{thm:dekking}, we consider the 6-letter alphabet 
\[\mathcal{B}=\{(ac,0),(ac,1),(acbc,0), (acbc,1), (acbc,2), (acbc,3)\}=\{1,2,3,4,5,6\},\]
and write
\[\sigma(ac)=12,\ \sigma(acbc)=3456.\]
Since, we know that $|\varphi(ac)|=4\cdot |ac|$ and $|\varphi(acbc)|=4\cdot |acbc|$, we consider
\begin{equation*}\begin{split}
\sigma(\varphi(ac)) &=\sigma(ac|acbc|ac)=1234|5612,\\
\sigma(\varphi(acbc)) &=\sigma(ac|acbc|acbc|acbc|ac)=1234|5634|5634|5612.
\end{split} \end{equation*}
We can now define a substitution $\bar{\varphi}\colon \mathcal{B}\to\mathcal{B}^*$ of constant length $4$ and a coding $\pi\colon\mathcal{B}\to\mathcal{A}$ by
\begin{equation*}
  \begin{split}
    1&\mapsto 1234\\
    2&\mapsto 5612\\
    3&\mapsto 1234\\
    4&\mapsto 5634\\
    5&\mapsto 5634\\
    6&\mapsto 5612
  \end{split}
\qquad \qquad
  \begin{split}
    1&\mapsto a\\
    2&\mapsto c\\
    3&\mapsto a\\
    4&\mapsto c\\
    5&\mapsto b\\
    6&\mapsto c
  \end{split}
\end{equation*}
Since $\bar{\varphi}(1)=\bar{\varphi}(3)$, $\pi(1)=\pi(3)$, and  $\bar{\varphi}(2)=\bar{\varphi}(6)$, $\pi(2)=\pi(6)$, we can further simplify $\bar{\varphi}$ and $\pi$ by identifying letters 1, 3 and 2, 6 together:
\begin{equation*}
  \begin{split}
    1&\mapsto 1214\\
    2&\mapsto 5212\\
    4&\mapsto 5214\\
    5&\mapsto 5214
  \end{split}
\qquad \qquad
  \begin{split}
    1&\mapsto a\\
    2&\mapsto c\\
    4&\mapsto c\\
    5&\mapsto b
  \end{split}
\end{equation*}
The sequence $x$ is  then a coding by $\pi$ of the fixed point $121452\dots$ of $\bar{\varphi}$ starting with 1.
\end{continueexample}

\subsection*{A necessary condition and a proof of Theorem \ref{thm:automatic}} 

Recognizability for substitutions is a classical tool, which comes in many (slightly) different forms; we refer to \cite{BSTY} for a comprehensive reference. In this paper, we will use the \textit{ (right) unilateral recognizability} for substitutions of constant length, since it is best suited for our purposes.  Since we will need to differentiate between two-sided and one-sided systems now, it will be useful to use the following notation. Let $\varphi\colon\mathcal{A}\rightarrow\mathcal{A}^*$ be a substitution and let
\[X_{\varphi}=\{x\in\mathcal{A}^{\mathbf{Z}}\mid \textrm{ every factor of } x \textrm{ appears in } \varphi^n(a) \textrm{ for some } a\in\mathcal{A}, n\geq 0\}\]
denote the \textit{two-sided} system generated by $\varphi$, and let 
\[X^{\N}_{\varphi}=\{x\in\mathcal{A}^{\mathbf{N}}\mid \textrm{ every factor of } x \textrm{ appears in } \varphi^n(a) \textrm{ for some } a\in\mathcal{A}, n\geq 0\}\]
denote the \textit{one-sided} system generated by $\varphi$. It is well known, that for a primitive $\varphi$, $X_{\varphi}$ (resp.\ $X^{\N}_{\varphi}$)  is equal to the orbit closure of any admissible fixed point of $\varphi$. The following theorem captures the recognizability property that we will need (and which we formulate for two-sided systems only although this particular statement holds for one-sided systems as well).

\begin{theorem}[Right unilateral recognizability]\label{thm:recog} Let $\varphi$ be a primitive  substitution of constant length $k$ with an admissible two-sided fixed point $x$ and assume $X_{\varphi}$ is infinite. There exists $l>0$ such that for all $z\in X_{\varphi}$, $\varphi(z)_{[m,m+l)}=\varphi(z)_{[m',m'+l)}$ implies that $m=m' \mod k$. The minimal such $l$ is called the recognizability constant of $\varphi$ and is denoted by $R_{\varphi}$.
\end{theorem}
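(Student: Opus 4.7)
The plan is to deduce this from the classical recognizability theorem for primitive aperiodic constant-length substitutions, a comprehensive treatment of which is given in \cite{BSTY}. First observe that since $\varphi$ is primitive, $X_\varphi$ is minimal, and combined with the infiniteness assumption this forces aperiodicity: no element of $X_\varphi$ is shift-periodic.

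I would argue by contradiction. If no such $l$ exists, then for each $n \geq 1$ one can choose $z_n \in X_\varphi$ and positions $m_n, m'_n$ with $m_n \not\equiv m'_n \pmod k$ and $\varphi(z_n)_{[m_n, m_n + n)} = \varphi(z_n)_{[m'_n, m'_n + n)}$. Writing $m_n = a_n k + r_n$ and $m'_n = a'_n k + r'_n$ with $r_n, r'_n \in \{0, \ldots, k-1\}$, the intertwining identity $\varphi \circ T = T^k \circ \varphi$ (valid because $\varphi$ has constant length $k$) allows one to replace $z_n$ by its shifts $u_n = T^{a_n} z_n$ and $u'_n = T^{a'_n} z_n$, both in $X_\varphi$, and obtain $\varphi(u_n)_{[r_n, r_n + n)} = \varphi(u'_n)_{[r'_n, r'_n + n)}$. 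Passing to a subsequence we may take $r_n = r$ and $r'_n = r'$ constant with $r \neq r'$, and by compactness of $X_\varphi$ extract limits $u_n \to u$ and $u'_n \to u'$. In the limit,
\[
\varphi(u)_{r+i} = \varphi(u')_{r'+i} \quad \text{for all } i \geq 0,
\]
so $T^r \varphi(u)$ and $T^{r'} \varphi(u')$ share a common right-infinite tail.

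The main obstacle is turning this asymptotic alignment into a contradiction when $r \not\equiv r' \pmod k$. Setting $d = r' - r \in \{1, \ldots, k-1\}$ (after possibly swapping the roles of the two points), the block structure of constant-length substitutions forces, for each sufficiently large $q$, that $\varphi(u_q)$ equals the length-$k$ window of $\varphi(u')$ beginning at position $qk + d$; that is, $\varphi(u_q)$ is the concatenation of the length-$(k-d)$ suffix of $\varphi(u'_q)$ with the length-$d$ prefix of $\varphi(u'_{q+1})$. After reducing to an alphabet on which $\varphi$ is injective on letters (which may always be arranged by quotienting), this defines a sliding-block map $(u'_q, u'_{q+1}) \mapsto u_q$ on the right tail of $u'$; combining with the symmetric code obtained by interchanging the roles of $u$ and $u'$, iteration eventually forces eventual periodicity of the right halves of $u$ and $u'$, which contradicts aperiodicity of $X_\varphi$ and completes the proof. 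This last periodicity extraction is the technical core of Moss\'e's recognizability argument, and I would import it directly from \cite{BSTY} rather than reproduce the combinatorial details here.
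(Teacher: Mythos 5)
Your compactness reduction is correct and is a genuinely different opening from the paper's proof, which simply invokes Moss\'e's theorem for the fixed point $x$ and then transfers the constant to every $z\in X_{\varphi}$ using minimality (all points of $X_{\varphi}$ have the same language). Your route handles the uniformity over $z$ automatically, which is a small gain; the aperiodicity observation, the normalisation of positions via $\varphi\circ T=T^k\circ\varphi$, and the passage to limits $u,u'$ with $\varphi(u)_{r+i}=\varphi(u')_{r'+i}$ for all $i\geq 0$ and $r\neq r'$ in $\{0,\dots,k-1\}$ are all fine.

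The gap is in the final step. What you have produced is a coincidence along a \emph{right half-line} only, and the assertion that such a coincidence with $r\neq r'$ is impossible is precisely \emph{unilateral} recognizability, i.e.\ the theorem being proved. The results you propose to import from \cite{BSTY} are \emph{bilateral}: they assert uniqueness of the centred representation $y=T^{j}\varphi(z)$ of a bi-infinite point, equivalently that a two-sided window around a cut determines the cut; they do not directly exclude two desubstitutions that agree only to the right. (Unilateral recognizability can genuinely fail for primitive aperiodic substitutions of non-constant length, which is why the paper cites \cite[Thm.\ 3.1]{Mosse-92} specifically and checks that its exceptional case cannot occur in constant length.) Moreover, your sketched endgame does not work as described: composing the sliding-block code $(u'_q,u'_{q+1})\mapsto u_q$ with its symmetric counterpart $(u_{q-1},u_q)\mapsto u'_q$ merely expresses $u_q$ as a function of $(u_{q-1},u_q,u_{q+1})$, which is vacuous, and the ``quotient to make $\varphi$ injective on letters'' is not harmless, since the quotient system could a priori be periodic without the original being so. Two honest ways to close the gap: (i) cite \cite[Thm.\ 3.1]{Mosse-92} for the fixed point, as the paper does; or (ii) keep \cite{BSTY} but first upgrade your half-line coincidence to a full one --- the pair $(T^{n}u,T^{n}u')$ satisfies $\varphi(T^{n}u)_{r+i}=\varphi(T^{n}u')_{r'+i}$ for all $i\geq -nk$, so a limit point $(v,v')$ of these pairs satisfies $T^{r}\varphi(v)=T^{r'}\varphi(v')$ in $\mathcal{A}^{\Z}$, and bilateral recognizability then forces $r=r'$, the desired contradiction.
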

\begin{proof}
The fact that the claim is true with $z$ equal to the fixed point $x$ follows e.g.\ from \cite[Thm.\ 3.1]{Mosse-92}   (note that for a substitution of constant length $k$ recognizability as defined in \cite[Def. 1.1]{Mosse-92} is equivalent with the fact that  $x_{[m,m+l)}=x_{[m',m'+l)}$ implies that $m=m' \mod k$ for some constant $l>0$ big enough, and that  for constant length substitutions the first condition in Theorem \cite[Thm.\ 3.1]{Mosse-92} never holds). It is easy to see that the constant $l$, which works for the fixed point $x$, works, in fact, for all $z\in X_{\varphi}$, since, by minimality, all $z\in X_{\varphi}$ have the same language as $x$.
\end{proof}

For a primitive substitution $\varphi\colon\mathcal{A}\to\mathcal{A}^*$ of constant length $k$ with the recognizability constant $R_{\varphi}$, we will say that two words $w,v\in \mathrm{L}(X_{\varphi})$ of length $\geq R_{\varphi}$ have \emph{the same cut} (w.r.t.\ $\varphi$) in $y\in X_{\varphi}$, if $w$ and $v$ occur in $y$ at positions with the same residue modulo $k$. By Theorem \ref{thm:recog}, this is unambiguous since any $y\in X_{\varphi}$ can be written as $y=T^c\varphi(z)$ for some $c\in\Z$ and $z\in X_{\varphi}$.
We note that by a result of Durand and Leroy \cite[Thm.\ 4]{DL-17}, the recognizability constant is computable for primitive substitutions although we will not need it.  We will however use the following estimates  on the recognizability constant for powers of a substitution.

\begin{proposition}\label{prop:recog}\cite[Prop.\ 13]{DL-17}  Let $\varphi\colon\mathcal{A}\to\mathcal{A}^*$ be a primitive  substitution of constant length $k$ and assume that $X_{\varphi}$ is infinite. There exists $C>0$ such that for all $n\geq 1$ we have
\[R_{\varphi^{n}}\leq C k^n,\]
where $R_{\varphi^{n}}$ denotes the recognizability constant of $\varphi^n$. 
\end{proposition}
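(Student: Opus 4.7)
The plan is to prove the bound by induction on $n$, aiming to establish a recurrence
\[
R_{\varphi^n} \leq k \cdot R_{\varphi^{n-1}} + D,
\]
where $D = D(\varphi) > 0$ is a constant independent of $n$. Unwinding this relation gives $R_{\varphi^n} \leq k^{n-1} R_{\varphi} + D \, (k^{n-1}-1)/(k-1) \leq \bigl(R_\varphi + D/(k-1)\bigr) k^{n-1}$, which suffices with, say, $C = R_\varphi + D/(k-1)$. The base case $n=1$ is simply the finiteness of $R_\varphi$, which is the content of Theorem~\ref{thm:recog} applied to $\varphi$ itself.

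For the inductive step I would exploit the factorisation $\varphi^n = \varphi \circ \varphi^{n-1}$: any $\varphi^n$-image of a point $z \in X_\varphi$ writes as $y = \varphi(\tilde y)$ with $\tilde y = \varphi^{n-1}(z) \in X_\varphi$. Assume $y_{[m,\, m+l)} = y_{[m',\, m'+l)}$ with $l \geq R_\varphi$. First, Theorem~\ref{thm:recog} applied to $\varphi$ yields $m \equiv m' \pmod{k}$; write $m = qk + r$ and $m' = q'k + r$ with $0 \leq r < k$. Second, inside each window one identifies, at the same relative offset, the $\varphi$-images of the subwords $\tilde y_{[q+1,\,q+1+t)}$ and $\tilde y_{[q'+1,\,q'+1+t)}$ for some $t \geq l/k - 2$. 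Since $\varphi$ has constant length $k$, the identity of these $\varphi$-images forces $\varphi(\tilde y_{q+1+i}) = \varphi(\tilde y_{q'+1+i})$ letter by letter. Assuming momentarily that $\varphi$ is injective on the letters of $\mathrm{L}(X_\varphi)$, this promotes to $\tilde y_{[q+1,\,q+1+t)} = \tilde y_{[q'+1,\,q'+1+t)}$. Choosing $l$ large enough that $t \geq R_{\varphi^{n-1}}$ — e.g.\ $l \geq k R_{\varphi^{n-1}} + 2k$ — the inductive hypothesis applied to $\varphi^{n-1}$ on $z$ gives $q \equiv q' \pmod{k^{n-1}}$, and hence $m \equiv m' \pmod{k^n}$. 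This yields the desired recurrence.

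The main obstacle is this desubstitution step: in general, $\varphi(u) = \varphi(v)$ does not imply $u = v$ when $\varphi$ is not injective on letters. I would circumvent this by first replacing $\varphi$ by the primitive substitution on the quotient alphabet obtained by identifying all letters of $\mathcal{A}$ sharing a common $\varphi$-image; this reduced substitution has the same constant length $k$, generates a subshift conjugate to $X_\varphi$ via a coding, and has recognizability constants differing from those of $\varphi$ only by a bounded factor depending on $\varphi$. Alternatively, one can invoke a sharper form of recognizability asserting that the $\varphi$-cut positions in $y$ already determine the desubstituted sequence $\tilde y$ uniquely. Either way, the extra constant introduced is absorbed into $D$, preserving the exponential rate $k^n$.
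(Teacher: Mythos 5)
Your route is genuinely different from the paper's: the paper does not reprove this estimate but simply cites \cite[Prop.\ 13]{DL-17} and checks that recognizability in the Durand--Leroy sense with constant $L$ implies right unilateral recognizability with constant $\leq 2(L+k)$; it even remarks that the Durand--Leroy notion ``seems crucial to the inductive proof''. Your self-contained induction is therefore attempting something the authors deliberately sidestep. The skeleton is fine: granting that $\varphi$ is injective on letters, the two-step argument (first $m\equiv m'\pmod k$ via $R_\varphi$ applied to $\tilde y=\varphi^{n-1}(z)\in X_\varphi$, then alignment of full $\varphi$-blocks, desubstitution, and $R_{\varphi^{n-1}}$ applied to $z$) correctly yields $R_{\varphi^n}\leq kR_{\varphi^{n-1}}+O(k)$ and hence the bound $Ck^n$.

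The genuine gap is the desubstitution step, and neither of your proposed fixes is actually established. For the quotient-alphabet fix: identifying letters with equal $\varphi$-images need not produce a substitution that is injective on letters (the images may still coincide after passing to the quotient alphabet, so the identification must be iterated until it stabilises); more seriously, the quotient system $X_\psi=\pi(X_\varphi)$ is a priori only a factor of $X_\varphi$, and your claims that it is ``conjugate via a coding'' and that its recognizability constants differ from those of $\varphi$ ``only by a bounded factor'' are exactly the nontrivial points. What you actually need is (a) that $X_\psi$ is still infinite --- otherwise Theorem \ref{thm:recog} does not apply to $\psi$ at all and the induction for $\psi$ collapses --- and (b) the transfer of recognizability from $\psi^n$ back to $\varphi^n$; point (b) is easy (equal windows in $\varphi^n(z)$ map letterwise to equal windows in $\psi^n(\pi(z))$, so $R_{\varphi^n}\leq R_{\psi^n}$), but point (a) is asserted without proof and is the crux. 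Your second fix --- a ``sharper form of recognizability'' determining the desubstituted letters, not just the cut positions --- is the right idea (it is what Moss\'e's theorem actually gives), but it is not what Theorem \ref{thm:recog} states, and to use it you would have to strengthen both the base case and the inductive hypothesis to this two-part form (cuts \emph{and} preimage letters are determined) and propagate the stronger statement through the induction. As written, the crucial step of the argument is deferred to two unproven alternatives, so the proof is incomplete.
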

\begin{proof}
Durand and Leroy use yet another definition of recognizability, see \cite[Def.\ 1]{DL-17}; it is easy to see that for a substitution of constant length $k$, recognizability in the sense of  \cite[Def.\ 1]{DL-17} with constant $L$ implies (right) unilateral recognizability with constant $\leq 2(L+k)$. Hence, the claim follows from \cite[Prop.\ 13]{DL-17}.
\end{proof}

We will also need the following recent result of the second author and Yassawi, which shows that dynamically two-sided minimal automatic systems and minimal purely automatic systems are the same \cite{MY-21}. This is not true on the level of sequences: there are automatic sequences, which are not purely automatic, the most famous example being perhaps the Golay--Shapiro sequence (known also as the Rudin--Shapiro sequence) \cite[Ex.\ 24, p. 205]{AlloucheShallit-book}.
\begin{theorem}\label{thm:MY}\cite[Thm.\ 22]{MY-21} Let $k\geq 2$ be an integer and let $X$ be a minimal two-sided $k$-automatic system. There exist $n\geq 1$ and a substitution $\varphi$ of constant length $k^n$ such that $X$ and $X_{\varphi}$ are conjugate. 
\end{theorem}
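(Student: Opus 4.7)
Since $X$ is a minimal $k$-automatic system, by definition $X=\overline{\Orb(y)}$ for some $k$-automatic sequence $y$, and by the definition of $k$-automaticity we may write $y=\pi(x)$ where $\pi$ is a coding and $x$ is a (two-sided) fixed point of some substitution $\varphi\colon\mathcal{A}\to\mathcal{A}^*$ of constant length $k$. The system $X_{\varphi}$ need not be minimal, but $\pi$ extends to a continuous shift-commuting map $X_{\varphi}\to\mathcal{B}^{\Z}$ whose image is a shift-invariant closed set containing $X$. My first step is to replace $X_{\varphi}$ by a minimal subsystem $Y\subseteq X_{\varphi}$ with $\pi(Y)=X$; such a $Y$ exists by Zorn's lemma applied to closed shift-invariant subsets mapping onto $X$. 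After restricting attention to the letters actually appearing in $Y$ and passing to a suitable power $\varphi^{n_0}$, I may assume that $\varphi^{n_0}$ restricted to this alphabet is primitive and that $Y=X_{\varphi^{n_0}}$; this is a primitive substitution of constant length $k^{n_0}$.

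The remaining task is to promote the factor map $\pi\colon Y\to X$ to a topological conjugacy with some constant-length-$k^n$ substitution system. The idea is to replace $Y$ by a higher block presentation on an enlarged alphabet $\mathcal{B}$ so that the induced coding becomes injective. Concretely, for $r$ large enough, the map $z\mapsto(z_{[i,i+r)})_{i\in\Z}$ sends $Y$ conjugately onto its image $Y^{(r)}$, and because $\pi$ is continuous on the compact minimal set $Y$, a sufficiently large $r$ also ensures that the letter-coding $\overline{\pi}$ reading off $\pi(z_0)$ from a window of length $r$ makes $\overline{\pi}\colon Y^{(r)}\to X$ a conjugacy (this is where minimality of $X$ and $Y$ gets used, to rule out points of $X$ with distinct preimage words). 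I would then use Mossé-type recognizability (Theorem \ref{thm:recog}) applied to an appropriate power $\varphi^n$ with $n\gg n_0$ to define a substitution $\psi$ on $\mathcal{B}$ of constant length $k^n$: for each $r$-block $b=(z_i,\ldots,z_{i+r-1})$ appearing in $Y$, the word $\psi(b)$ is obtained by reading the $r$-blocks over the image $\varphi^n(z_i)\cdots\varphi^n(z_{i+r-1})$ at the $k^n$ positions corresponding to the expansion of $z_i$. Recognizability ensures these positions are unambiguous once $k^n$ is large relative to $r$, and so $\psi$ is well-defined and of constant length $k^n$; by construction $X_\psi=Y^{(r)}$ and hence $X_\psi$ is conjugate to $X$.

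The main obstacle is the last construction: one has to verify that for $n$ large enough the local substitution rule on $r$-blocks is both well-defined (does not depend on the ambient word) and gives back exactly the orbit closure $Y^{(r)}$. Well-definedness follows from unilateral recognizability of $\varphi^n$ (Theorem \ref{thm:recog}), which guarantees that any occurrence of the $r$-block $b$ inside $\varphi^n(z)$, for $z\in Y$, has a uniquely determined parsing into $\varphi^n$-images, and hence a uniquely determined list of $k^n$ consecutive $r$-blocks above it. Matching $\psi$-fixed points to elements of $Y^{(r)}$ then uses Proposition \ref{prop:recog} in the usual way: fixed points of $\psi$ give rise, under desubstitution, to fixed points of $\varphi^{n_0}$ inside $Y$, and vice versa. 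Once this is set up, the conjugacy $X_\psi\simeq Y^{(r)}\simeq Y\overset{\pi}{\simeq} X$ completes the proof, with the integer $n$ chosen to satisfy $k^n\geq C\cdot r$ for $C$ as in Proposition \ref{prop:recog}.
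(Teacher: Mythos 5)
This theorem is not proved in the paper at all; it is imported verbatim from \cite{MY-21}, so the only meaningful comparison is between your argument and the proof given there. Your argument has a fatal gap at its central step. You claim that, after passing to a sufficiently high block presentation $Y^{(r)}$ of the minimal purely automatic cover $Y$, the induced letter coding $\bar{\pi}\colon Y^{(r)}\to X$ becomes a conjugacy, with minimality ``ruling out points of $X$ with distinct preimage words.'' This is false. The higher-block map $Y\to Y^{(r)}$ is itself a conjugacy, so $\bar{\pi}$ is injective if and only if the original coding $\pi\colon Y\to X$ is, and factor maps between minimal automatic systems are very often non-injective. The standard counterexample: the Thue--Morse system ($0\mapsto 01$, $1\mapsto 10$) maps onto the period-doubling system via the sliding block code $x_ix_{i+1}\mapsto x_i\oplus x_{i+1}$; both systems are minimal and $2$-automatic, and the map is everywhere $2$-to-$1$ (a sequence and its bitwise complement have the same image). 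Passing to the $2$-block presentation of Thue--Morse turns this into a genuine $1$-block coding between minimal systems --- exactly the situation of your $\pi\colon Y\to X$ --- and no amount of recoding makes it injective. So your construction only re-establishes that $Y$, not $X$, is conjugate to a constant-length substitution system, which was clear from the outset. (The theorem is nonetheless true for period-doubling: it happens to be purely automatic itself, but via a substitution that is not obtained from the Thue--Morse cover in your manner.)

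There is also a secondary gap earlier on: the claim that the minimal subsystem $Y\subseteq X_{\varphi}$ equals $X_{\varphi^{n_0}}$ for a power of $\varphi$ restricted to the letters occurring in $Y$ requires an argument (one must show the sub-alphabet is stable under $\varphi^{n_0}$, that the restricted substitution is primitive, and that its language is exactly $\mathrm{L}(Y)$); this can be done for constant-length substitutions but is not automatic, and Zorn's lemma as you invoke it only yields a minimal subsystem, which maps \emph{onto} $X$ by minimality of $X$ rather than by your stated argument. The genuine difficulty of the theorem is precisely the point your proof skips: one must construct a constant-length substitution living on $X$ itself rather than on its purely automatic cover. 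The proof in \cite{MY-21} does this using the structure theory of constant-length substitutions (column number, pure base) together with recognizability, and it uses invertibility of the two-sided system in an essential way --- which is exactly why the paper's footnote warns that the statement does not transfer immediately to one-sided systems. Your argument, were it correct, would apply verbatim to one-sided systems and would never use two-sidedness; this is a further sign that an essential obstacle is being glossed over.
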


The crucial part in the proof of  Theorem \ref{thm:automatic} is to show that automaticity implies that ${}^{t}(|\varphi^s(w)|)_{w\in\mathrm{R_{a}}}$ is a left eigenvector of $M_{\tau}$. To prove this, we will first show that  ${}^{t}(|\varphi^n(w)|)_{w\in\mathrm{R_{a}}}$ is a left eigenvector of $M_{\tau}$ for some $n\geq 0$. We will then use the following simple fact to reduce $n$ to $s$ from Theorem \ref{thm:automatic}.

\begin{lemma}\label{lem:matrix} Let $M$ be a $d\times d$ matrix and let $s$ denote the size of the largest Jordan block of $M$ corresponding to the eigenvalue 0. Let $v$ be a vector of length $d$ and put $v_n=M^nv$, $n\geq 0$. If $v_n$  is an eigenvector of $M$ for some $n\geq 0$, then $v_s$ is an eigenvector of $M$.
\end{lemma}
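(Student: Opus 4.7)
The plan is to use a polynomial (B\'ezout-type) decomposition of $v$ in order to split it into a piece lying in the $\lambda$-eigenspace of $M$ and a piece killed by a sufficiently large power of $M$. Suppose $v_n = M^n v$ is an eigenvector of $M$ with eigenvalue $\lambda$; equivalently, the polynomial $x^n(x-\lambda)$ annihilates $v$. I first dispose of the degenerate case $\lambda=0$: then $M^{n+1}v=0$, i.e.\ $v\in\ker M^{n+1}$. Since the ascending chain $\ker M\subseteq \ker M^2\subseteq\cdots$ stabilises precisely at index $s$, we have $\ker M^k\subseteq\ker M^s$ for every $k\geq 0$, giving $M^s v=0$, so $v_s$ is trivially annihilated by $M-\mu I$ for every $\mu$.

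The main case is $\lambda\neq 0$. Then $\gcd(x^n,\,x-\lambda)=1$ in $\C[x]$, so B\'ezout's identity produces polynomials $a(x),b(x)\in\C[x]$ with $a(x)x^n+b(x)(x-\lambda)=1$. Evaluating at $M$ and applying to $v$ yields the decomposition $v=v_\lambda+u$ with $v_\lambda:=a(M)M^nv$ and $u:=b(M)(M-\lambda I)v$. Using the hypothesis $(M-\lambda I)M^nv=0$, a direct computation gives $(M-\lambda I)v_\lambda=0$ and $M^nu=0$; thus $v_\lambda$ is an eigenvector of $M$ with eigenvalue $\lambda$, while $u\in\ker M^n\subseteq\ker M^s$.

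Applying $M^s$ now annihilates $u$, so $v_s=M^sv=M^s v_\lambda=\lambda^s v_\lambda$. Combining $v_n=M^nv=\lambda^n v_\lambda\neq 0$ with $\lambda\neq 0$ forces $v_\lambda\neq 0$, and therefore $v_s$ is a nonzero eigenvector of $M$ with eigenvalue $\lambda$. I do not anticipate any real obstacle; the only delicate point is the uniform containment $\ker M^k\subseteq\ker M^s$ for every $k\geq 0$, which rests precisely on $s$ being the size of the largest Jordan block of $M$ at the eigenvalue $0$.
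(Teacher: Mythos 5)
Your argument is correct and lands in the same place as the paper, but by a genuinely different mechanism. The paper's proof is a two-line computation: from $Mv_n=\lambda v_n$ and $v_n=M^nv$ one gets $M^n(Mv-\lambda v)=0$, hence $Mv-\lambda v\in\mathrm{Ker}(M^n)\subseteq\mathrm{Ker}(M^s)$, and applying $M^s$ yields $Mv_s=\lambda v_s$ directly, with no splitting of $v$ and no case distinction on $\lambda$. You instead invoke a B\'ezout (primary) decomposition $v=v_\lambda+u$ with $v_\lambda$ in the $\lambda$-eigenspace and $u\in\mathrm{Ker}(M^n)$; this is heavier machinery resting on exactly the same key containment $\mathrm{Ker}(M^n)\subseteq\mathrm{Ker}(M^s)$ that drives the paper's argument. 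What your route buys is the explicit nonvanishing $v_s=\lambda^s v_\lambda\neq 0$ when $\lambda\neq 0$, a point the paper's proof silently skips (an eigenvector must be nonzero). Your $\lambda=0$ branch, however, only shows $M^sv=0$, and the zero vector is \emph{not} an eigenvector: for $M$ a single nilpotent Jordan block of size $2$ and $v=e_2$ one has $s=2$, $v_1=e_1$ is an eigenvector, yet $v_s=v_2=0$ is not, so the lemma as literally stated fails in this degenerate case. This is a defect of the statement rather than of your argument (the paper's own proof has the same blind spot), and it is harmless in the intended application, where $\lambda$ is the dominant eigenvalue $k\geq 2$ of a primitive incidence matrix and the vectors $v_n$ have positive entries; still, to make the lemma literally true you should either exclude $\lambda=0$ or add the hypothesis $v_s\neq 0$.
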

\begin{proof}
 Let  $s$ denote the size of the largest Jordan block of $M$ corresponding to the eigenvalue $0$; using the Jordan decomposition of $M$ it is not hard to see that $\mathrm{Ker}(M^n)\subset\mathrm{Ker}(M^s)$ for all $n\geq 0$ (with equality for $n\geq s$). Assume that $v_n$ is an eigenvector of $M$ and let $\lambda$ be a scalar such that $Mv_n=\lambda v_n$. Since $v_n=M^nv$, we get that
\[M^n(Mv-\lambda v)=0,\]
and thus either $v$ is an eigenvector of $M$ and so $v_s=M^sv$ is an eigenvector of $M$, or $Mv-\lambda v$ lies in  $\mathrm{Ker}(M^n)\subset\mathrm{Ker}(M^s)$ and so $M^{s+1}v=\lambda M^sv$ and $v_s$ is an eigenvector of $M$. 
\end{proof}

We are now ready to prove Theorem \ref{thm:automatic}. We will first show the dynamical part of Theorem \ref{thm:automatic} (see Theorem \ref{thm:2automatic} below) in the two-sided case, and then deduce the one-sided case. At the end, we will deduce the rest of the claims in Theorem \ref{thm:automatic} and Corollary \ref{cor:left-proper}.

\begin{theorem}\label{thm:2automatic} Let $\varphi\colon \mathcal{A}\to\mathcal{A}^*$ be a primitive substitution, let $x$ be an admissible fixed point of $\varphi$, assume that $x$ is nonperiodic, and let $a=x_0$. Let $X$ be the system generated by $x$.  Let $\tau\colon\mathrm{R}_a\to\mathrm{R}_a^*$ be the return substitution to $a$, let $M_{\tau}$ denote  the incidence matrix of $\tau$, and let $s\geq 0$ denote the size of the largest Jordan block of $M_{\tau}$ corresponding to the eigenvalue $0$.  The following conditions are equivalent:\begin{enumerate}
\item\label{thm:2automatic3} $X$ is an  automatic system;
\item\label{thm:2automatic1} $X$ has an infinite automatic system $Y$ as a (topological) factor;
\item\label{thm:2automatic2} ${}^{t}(|\varphi^s(w)|)_{w\in\mathrm{R_{a}}}$ is a left eigenvector of $M_{\tau}$.
\end{enumerate}
\end{theorem}

\begin{proof}[Proof of Theorem \ref{thm:2automatic} in the two-sided case]
The fact that \eqref{thm:2automatic3} implies \eqref{thm:2automatic1} is obvious.  Note that the orbit closure $X$ of $x$ is equal to the system $X_{\varphi}$. To show that \eqref{thm:2automatic1} implies \eqref{thm:2automatic2}, we will first show that there exist integers $k$ and  $p>q\geq 1$, such that $|\varphi^p(w)|=k^{p-q}|\varphi^q(w)|$ for all  $w\in\mathrm{R}_a$. Let
 \begin{equation}\label{eq:factormap1} \pi\colon X_{\varphi}\to Y\end{equation} 
denote a factor map onto some infinite automatic system $Y$; note that $Y$ is minimal. By Theorem \ref{thm:MY}, there exist a substitution $\bar{\varphi}\colon \mathcal{B}\to\mathcal{B}^*$ of constant length and a conjugacy
 \begin{equation}\label{eq:factormap2} \bar{\pi}\colon Y \to X_{\bar{\varphi}}.\end{equation} 
Note that $X_{\bar{\varphi}}$ is infinite. Composing maps \eqref{eq:factormap1} and \eqref{eq:factormap2}, we get the factor map 
\begin{equation}\label{eq:factormap3}\tau\colon X_{\varphi}\to X_{\bar{\varphi}}.\end{equation}
Since $X_{\bar{\varphi}}$ is infinite, by Cobham's theorem for minimal substitutive systems the dominant eigenvalues of $M_{\varphi}$ and $M_{\bar{\varphi}}$ are multiplicatively dependent  \cite[Thm.\ 14]{D-french} (see also \cite[Thm.\ 11]{Durand-dynamical} for a short ergodic-theoretic proof). Passing to some (nonzero) powers $\varphi^e$ and $\bar{\varphi}^f$ of $\varphi$ and $\bar{\varphi}$, respectively, we may assume that $M^e_{\varphi}$ and $M^f_{\bar{\varphi}}$ have the same integer dominant eigenvalue $k\geq 2$; note that this means that $\bar{\varphi}^f$ is of constant length $k$. 
By the  Curtis--Hedlund--Lyndon Theorem, the factor map $\tau$  is a (centred) sliding block-code, i.e.\ there exist an $r\geq 0$, and a map $\tau_r\colon \mathrm{L}^{2r+1}(X_{\varphi})\to \mathcal{B}$ such that $\tau(x)_i=\tau_r(x_{[i-r,i+r]})$ for $i\in\mathbf{Z}$ \cite[Thm.\ 6.2.9]{LM-book}. For a word $w=w_0\dots w_{d-1}$ in $\mathrm{L}(x)$ of length $d\geq 2r+1$, we will also use the notation
\[\tau(w)=\tau_r(w_{[0,2r+1)})\tau_r(w_{[1,2r+2)})\dots \tau_r(w_{[d-2r-1,d)})  \]

to denote the image of $w$ by $\tau$, which is a word over $\mathcal{B}$ of length $d-2r$.

Let $y=\tau(x)$. For each $n\geq 1$, let $R_n$ denote the recognizability constant of $\bar{\varphi}^{fn}$. By Proposition \ref{prop:recog}, 
\begin{equation}\label{eq:ineq1} R_n\leq Ck^n, \quad n\geq 1,\end{equation} for some constant $C>0$ independent of $n$.  For each $n\geq 1$, let $m_n$ be the smallest integer such that
\begin{equation}\label{eq:ineq2}   |\varphi^{em_{n}}(a)|\geq R_n +2r.\end{equation}
For each $n\geq 0$, there exist $y^n\in X_{\bar{\varphi}}$ and $c_n\in \Z$ such that $y=T^{c_{n}}(\bar{\varphi}^{fn}(y^n))$ (see e.g.\ \cite[Lem.\ 2.11]{BKK}). Write $x$ as a concatenation of words $\varphi^{em_{n}}(w)$, $w\in\mathrm{R}_a$, and note that each word $\varphi^{em_{n}}(w)$, $w\in \mathrm{R}_a$ starts with $\varphi^{em_{n}}(a)$.  Hence, we can write $y=\tau(x)$ as a concatenation of words
\begin{equation}\label{eq:cut}\tau(\varphi^{em_{n}}(w)\varphi^{em_{n}}(a)_{[0,2r)}), \ w\in \mathrm{R}_a
\end{equation}
(of length $|\varphi^{em_{n}}(w)|$, respectively), and, by \eqref{eq:ineq2}, all words \eqref{eq:cut} share a prefix  of length $\geq R_n$. Thus, by recognizability of $\bar{\varphi}^{fn}$  (Theorem \ref{thm:recog}), all words \eqref{eq:cut} have the same cut  in $y=T^{c_{n}}(\bar{\varphi}^{fn}(y^n))$ with respect to $\bar{\varphi}^{fn}$. Since $\bar{\varphi}^{fn}$ has constant length $k^n$,  for each $w\in\mathrm{R}_a$ we have 
\begin{equation}\label{eq:formula}|\varphi^{em_{n}}(w)|=c_w^{(n)}k^n,\quad n\geq 1
\end{equation} for some integers $c_w^{(n)}\geq 1$. Since $\varphi^e$ is a primitive substitution with dominant eigenvalue $k$, for each nonempty $u\in\mathcal{A}^*$, we have 
\begin{equation}\label{eq:PF}
\lim_{n\to\infty} \frac{|\varphi^{en}(u)|}{k^n}=c(u)
\end{equation} for some $c(u)>0$ \cite[Prop.\ 8.4.1]{AlloucheShallit-book}. Using the fact that $m_n$ is the smallest integer satisfying  \eqref{eq:ineq2}, we have that 
\[|\varphi^{e(m_{n}-1)}(a)|<R_n+2r\leq Ck^n+2r,\]  and hence, by \eqref{eq:PF} applied to $a$, $k^{m_n-n}$ is bounded independently of $n$. Applying \eqref{eq:PF}  to the words $w\in\mathrm{R}_a$, we get that the integers $c_w^{(n)}$, $n\geq 1$, are bounded independently of $n$.

Now, by pigeonhole principle, we can find two integers $p>q\geq 1$ such that  $c_w^{(p)}=c_w^{(q)}$ for all $w\in \mathrm{R}_a$. By \eqref{eq:formula}, we have that  
\begin{equation}\label{eq:final} |\varphi^{em_p}(w)|=k^l|\varphi^{em_q}(w)|,\quad w\in\mathrm{R}_a,\end{equation} where $l=p-q\geq 1$; note that this implies that $m_p>m_q$. 

 Let $\tau\colon\mathrm{R}_a\to\mathrm{R}_a$ be the return substitution to $a$ and let $M_{\tau}$ denote  the incidence matrix of $\tau$. Let $v_n={}^{t}(|\varphi^n(w)|)_{w\in\mathrm{R_{a}}}$, $n\geq 0$  and note that $v_n=v_0M_{\tau}^n$ for $n\geq 0$. Thus, we may rewrite equality \eqref{eq:final} as
\[ v_{em_{q}}M_{\tau}^{e(m_{p}-m_{q})} =  k^{l} v_{em_{q}},\]  which shows that $v_{em_{q}}$ is a left eigenvector of $M_{\tau}^{e(m_{p}-m_{q})}$ (corresponding to a nonzero eigenvalue), and thus it is a left eigenvector of $M_{\tau}$. By Lemma \ref{lem:matrix}, applied to the transposes of the  vectors $v_n$ and the matrix $M_{\tau}$, we get that $v_s= {}^{t}(|\varphi^s(w)|)_{w\in\mathrm{R_{a}}}$ is a left eigenvector of $M_{\tau}$, where  $s$ denotes the size of the largest Jordan block of $M_{\tau}$ corresponding to the eigenvalue $0$. This shows the claim.

To show  that \eqref{thm:2automatic2} implies \eqref{thm:2automatic3} assume that  ${}^{t}(|\varphi^s(w)|)_{w\in\mathrm{R_{a}}}$ is a left eigenvector of $M_{\tau}$.  By Theorem \ref{thm:dekking} (applied to  $W=\mathrm{R_{a}}$, $n=s$), $x$ is automatic. Hence $X=X_{\varphi}$ is automatic.
\end{proof}

To deduce Theorem \ref{thm:2automatic} for one-sided system, we first show the following simple lemma.
\begin{lemma}\label{lem:one-sided} Let $\varphi\colon \mathcal{A}\to\mathcal{A}^*$ be a primitive substitution, let $r\geq 0$,  and let $\pi_r\colon \mathcal{A}^{2r+1}\to\mathcal{B}$ be a block map. Let $\pi\colon \mathcal{A}^{\N}\to\mathcal{B}^{\N}$ be the map $\pi((x_n)_n)=(\pi_r(x_{[n,n+2r+1)}))_n$ induced by $\pi_r$ on $\mathcal{A}^{\N}$, and let $\pi\colon \mathcal{A}^{\Z}\to\mathcal{B}^{\Z}$ be the map  $\pi((x_n)_n)=(\pi_r(x_{[n-r,n+r]}))_n$ induced by $\pi_r$ on  $\mathcal{A}^{\Z}$ (and denoted by the same letter). Then $\pi(X^{\N}_{\varphi})$ is $k$-automatic if and only if $\pi(X_{\varphi})$ is $k$-automatic.
\end{lemma}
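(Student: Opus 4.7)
The plan is to prove both implications, using that $\pi(X^{\N}_{\varphi})$ and $\pi(X_{\varphi})$ are minimal subshifts (as factors of the minimal substitutive systems $X^{\N}_{\varphi}$ and $X_{\varphi}$) sharing the same language. The periodic case is immediate, so I assume both systems are infinite.

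The direction $\pi(X_{\varphi})$ is $k$-automatic $\Rightarrow$ $\pi(X^{\N}_{\varphi})$ is $k$-automatic is straightforward: I take a two-sided $k$-automatic sequence $y$ generating $\pi(X_{\varphi})$; by the sequence-level characterisation recalled at the start of this section, $y_{[0,\infty)}$ is then one-sided $k$-automatic, and, having language contained in $\mathrm{L}(\pi(X^{\N}_{\varphi}))$, by minimality it generates $\pi(X^{\N}_{\varphi})$.

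For the converse, my plan is to exhibit an infinite $k$-automatic two-sided factor of $X_{\varphi}$: then the already-proven two-sided case of Proposition~\ref{thm:automatic} will imply $X_{\varphi}$ is $k$-automatic, whence its topological factor $\pi(X_{\varphi})$ is $k$-automatic. By the classical Dekking theorem recalled in the introduction, the infinite minimal $k$-automatic system $\pi(X^{\N}_{\varphi})$ admits the $k$-adic odometer $\Z_k$ as a (maximal equicontinuous) factor. Composing the factor map $\pi(X^{\N}_{\varphi})\to \Z_k$ with the natural-extension projection $\pi(X_{\varphi}) \to \pi(X^{\N}_{\varphi})$, $y\mapsto y_{[0,\infty)}$, and then with $\pi\colon X_{\varphi} \to \pi(X_{\varphi})$, yields a factor map $X_{\varphi} \to \Z_k$. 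Since $\Z_k$ is itself an infinite $k$-automatic subshift (being the $k$-adic odometer, a constant-length-$k$ substitutive system), this is the required factor.

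The main subtle point is invoking the eigenvalue structure of Dekking for the \emph{one-sided} system $\pi(X^{\N}_{\varphi})$: although the introduction phrases the statement for minimal $k$-automatic systems in general, the reference \cite{MY-21} cited there treats the two-sided case, so transferring the $\Z_k$ factor to the one-sided setting---or deriving it directly from a $k$-automatic generator of $\pi(X^{\N}_{\varphi})$ via the DFA structure---is where the real work lies.
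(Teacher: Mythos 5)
Your first implication ($\pi(X_{\varphi})$ automatic $\Rightarrow$ $\pi(X^{\N}_{\varphi})$ automatic) is correct and is essentially the paper's argument: restrict a two-sided $k$-automatic generator to $[0,\infty)$ and use minimality.

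The converse direction, however, contains a fatal gap. You propose to build a factor map $X_{\varphi}\to\Z_k$ and then invoke the two-sided implication \eqref{thm:automatic1}$\Rightarrow$\eqref{thm:automatic3} of Proposition \ref{thm:automatic}. But condition \eqref{thm:automatic1} requires an infinite \emph{automatic system} as a factor, and in this paper an automatic system is by definition a subshift (the orbit closure of an automatic sequence). The $k$-adic odometer $\Z_k$ is an infinite minimal equicontinuous system, hence not expansive, hence not conjugate to any subshift; your parenthetical claim that it is ``an infinite $k$-automatic subshift'' is false. This is not a repairable technicality: Example \ref{ex:spectral} of the paper exhibits a minimal substitutive system whose maximal equicontinuous factor is exactly $\Z_2$ --- so it does factor onto the odometer --- and which is nevertheless not automatic. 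Hence ``$X_{\varphi}$ factors onto $\Z_k$'' cannot imply ``$X_{\varphi}$ is automatic'', and the point you flag as the main subtlety (transferring the $\Z_k$ factor between the one-sided and two-sided settings) is not where the real difficulty lies. The paper's actual argument is more elementary and stays entirely at the level of subshifts: given a one-sided $k$-automatic sequence $z$ generating $\pi(X^{\N}_{\varphi})$, form the two-sided $k$-automatic sequence ${}^{\omega}\spadesuit.z$ (prepend a constant left-infinite ray over a fresh symbol $\spadesuit$), let $Z'$ be its --- generally nonminimal --- orbit closure, observe that $\mathrm{L}(\pi(X_{\varphi}))=\mathrm{L}(\pi(X^{\N}_{\varphi}))\subset\mathrm{L}(Z')$ so that $\pi(X_{\varphi})$ is a subsystem of $Z'$, and conclude by the fact that all subsystems of a $k$-automatic system are $k$-automatic \cite[Thm.\ 2.9]{BKK}. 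If you want to salvage a proof along your lines, you would need an infinite automatic \emph{subshift} factor of $X_{\varphi}$, not merely the odometer, and producing one from the one-sided data is essentially the content of the lemma itself.
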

\begin{proof} Since $\varphi$ is primitive, we have that $\mathrm{L}(X_{\varphi})=\mathrm{L}(X^{\N}_{\varphi})$, and thus $\mathrm{L}(\pi(X_{\varphi}))=\mathrm{L}(\pi(X^{\N}_{\varphi}))$.

First assume that $\pi(X_{\varphi})$ is $k$-automatic and let $z\in \pi(X_{\varphi})$ be a $k$-automatic sequence. Write $z=z''.z'$, and note that $z'$ is a one-sided $k$-automatic sequence that lies in $\pi(X^{\N}_{\varphi})$. Since $\pi(X^{\N}_{\varphi})$ is minimal, $z'$ generates  $\pi(X^{\N}_{\varphi})$ and $\pi(X^{\N}_{\varphi})$ is $k$-automatic.

Conversely, assume that $\pi(X^{\N}_{\varphi})$ is $k$-automatic, and let $z\in \pi(X^{\N}_{\varphi})$ be a $k$-automatic sequence. Let $\spadesuit$ be a symbol not in $\mathcal{A}$ and consider a two-sided sequence $z'={}^{\omega}\spadesuit.z$, where $y={}^{\omega}\spadesuit$ denotes the constant left-infinite sequence consisting of $\spadesuit$; note that $z'$ is $k$-automatic (in fact, any left-infinite automatic sequence $y$ would do). Let $Z'=\overline{\mathcal{O}(z')}$ be the (nonminimal) system generated by $z'$, and note that  $\mathrm{L}(\pi(X_{\varphi}))\subset \mathrm{L}(Z')$. Hence, $\pi(X_{\varphi})$ is a subsystem of $Z'$. By \cite[Thm.\ 2.9]{BKK}, all subsystems of a $k$-automatic systems are $k$-automatic, and so  $\pi(X_{\varphi})$ is $k$-automatic.
\end{proof}

\begin{proof}[Proof of Theorem \ref{thm:2automatic} in the one-sided case] The fact that \eqref{thm:2automatic3} implies \eqref{thm:2automatic1} is obvious and the implication from \eqref{thm:2automatic2} to \eqref{thm:2automatic3} can be shown in the same way as in the two-sided case using the one-sided version of Theorem \ref{thm:dekking}. To show that \eqref{thm:2automatic1} implies \eqref{thm:2automatic2}, assume that $X^{\N}_{\varphi}$ has an infinite topological factor $\pi(X^{\N}_{\varphi})$. The factor map $\pi$ is induced by some block map $\pi_r\colon \mathcal{A}^{2r+1}\to\mathcal{B}$, which gives the factor map $\pi\colon X_{\varphi}\to \pi(X_{\varphi})$ as in Lemma \ref{lem:one-sided}. Then, $\pi(X_{\varphi})$ is an infinite factor  of $X_{\varphi}$.  By Lemma \ref{lem:one-sided}, $\pi(X_{\varphi})$ is automatic and hence  \eqref{thm:2automatic2} is satisfied using the corresponding claim from the two-sided case.
\end{proof}

\begin{proof}[Proof of Theorem \ref{thm:automatic}] The claims \eqref{thm:automatic3}, \eqref{thm:automatic1}, \eqref{thm:automatic2} are equivalent in both one-sided and two-sided case by Theorem \ref{thm:2automatic}. We only have to show the equivalence of \eqref{thm:automatic0} and \eqref{thm:automatic0.5} with the rest of the claims.  The proof now is verbatim the same for the one-sided and two-sided case, so we do not make the distinction.

    Clearly, \eqref{thm:automatic0} implies \eqref{thm:automatic0.5}. If $x$ is a fixed point of $\varphi$ and $\varrho\colon \A \to\B$ is a coding such that $y=\varrho(x)$ is automatic and nonperiodic, then $Y=\overline{\O(y)}$ is an automatic infinite topological factor of $X=\overline{\mathcal{O}(x)}$. Hence, \eqref{thm:automatic0.5} implies \eqref{thm:automatic1}. By Theorem \ref{thm:dekking}, the criterion in  \eqref{thm:automatic2} implies that $x$ is automatic, i.e.\ \eqref{thm:automatic0} holds which finishes the proof.
\end{proof}

\begin{proof}[Proof of Corollary \ref{cor:left-proper}] The fact that $\eqref{cor:proper2}$ implies $\eqref{cor:proper1}$ follows from Theorem \ref{thm:dekking} (using the factorisation with respect to  $W=\mathcal{A}$) and the fact that codings of automatic sequences are automatic. To show the other implication, let $x$ be a fixed point of $\varphi$ and assume that $x$ is automatic. Let $a=x_0$, and let $\mathrm{R}_a$ be the set of return words to $a$ in $x$. By Theorem \ref{thm:automatic}, there exist $n\geq 0$ and $k\geq 2$ such that $|\varphi^{n+1}(w)|=k|\varphi^n(w)|$ for $w\in \mathrm{R_a}$. Since $\varphi$ is left-proper, each word $\varphi(b)$, $b\in\mathcal{A}$ starts with $a$, and so each word $\varphi(b)$ is (uniquely) factorisable over $\mathrm{R}_a$. Hence, $|\varphi^{n+2}(b)|=k|\varphi^{n+1}(b)|$ for all $b\in\mathcal{A}$ and ${}^{t}(|\varphi^{n+1}(a)|)_{a\in\mathcal{A}}$ is a left eigenvector of $M_{\varphi}$. By Lemma \ref{lem:matrix}, ${}^{t}(|\varphi^s(a)|)_{a\in\mathcal{A}}$ is a left eigenvector of $M_{\varphi}$, where $s$ denotes the size of the largest Jordan block of $M_{\varphi}$ corresponding to the eigenvalue $0$.
\end{proof}

\section{Fibres over k-adic odometers}\label{sec:spectral}

	In this section we show that there exists a minimal substitution system with $\Z_2$ as the maximal equicontinuous factor with the factor map being everywhere uncountable-to-one and provide motivation for Conjecture \ref{con:dynamic_char}. For simplicity we restrict ourselves here to two-sided systems.

	We will use a little bit of graph-theoretic terminology. 	
	A \emph{tree} is an undirected graph in which every pair of distinct vertices is connected by exactly one path.
	A \emph{rooted tree} is a tree in which one vertex has been designated  as the root.
	In a rooted tree, the \emph{parent} of a vertex $v$ is the vertex connected to $v$ on the path to the root. 	A child of a vertex $v$ is a vertex of which $v$ is the parent. 
	A \emph{descendant} of a vertex $v$ is any vertex that is either a child of v or is (recursively) a descendant of a child of v. 
	 A \emph{leaf} is a vertex with no children.
	A disjoint union of trees is called a \emph{forest}.
	Equivalently, a forest is an undirected graph in which every pair of distinct vertices is connected by at most one path.

\begin{theorem}\label{prop:example_factortokadics}
Let $\varphi\colon \{a,b\}\to\{a,b\}^*$ be a substitution given by
\[a\mapsto abbbba,\ b\mapsto aa.\]
The system $X=X_{\varphi}$ has $\Z_2$ as the maximal equicontinuous factor and all the fibres of a factor map $\pi\colon X\to \Z_2$ are uncountable.
\end{theorem}
\begin{proof}

	The fact that $\Z_2$ is the maximal equicontinuous factor of $X$ was shown in Example \ref{ex:spectral}.
	To show the last claim  we first need to explicitly define the factor map $\pi\colon X\to \Z_2$.

	Put $\A=\{a,b\}$.
	By the recognizability of $\varphi$ in $X = X_{\varphi}$, the sets
\begin{equation}\label{eq:recognisability}
\P_n=\{ T^i\varphi^n[c] \colon c\in\A \mbox{ and } 0\leq i < \abs{\varphi^n(c)}\},\quad n\geq 1
\end{equation}
	form  a sequence of nested clopen partitions of $X$. 		
	All points in $X$ are in one-to-one correspondence with infinite itineraries given by elements of $\P_n$, $n\geq 1$ \cite{CanteriniSiegel2001}. (In general, by \cite{CanteriniSiegel2001}, for any primitive aperiodic substitution $\varphi$, all points in $X_{\varphi}$ are in one-to-one correspondence with infinite itineraries given by elements of $\P_n$ with the possible exception of the shifts of the $\varphi$-periodic points. 
		Since our substitution $\varphi$ has only one two-sided $\varphi$-periodic point---the fixed point ${}^{\infty}\varphi(a)\varphi^{\infty}(a)$)---all sequences in $X$ are faithfully represented.)

	By the calculations in Example \ref{ex:spectral},
    \begin{equation}\label{eq:growthofletters}
\begin{split}
\abs{\varphi^n(a)} &= 2^n f_n(a) = 2^n\left(\frac{(-1)^{n+1} + 4\cdot 2^n}{3}\right), \\
\abs{\varphi^n(b)} &= 2^n f_n(b) = 2^n\left(\frac{(-1)^{n} + 2\cdot 2^n}{3}\right).
\end{split}
\end{equation}

In particular, both $\abs{\varphi^n(a)}$ and $\abs{\varphi^n(b)}$ are divisible by $2^n$ for all $n\geq 1$.
	Hence, for each $n\geq 1$ the sets
\[
X(n,j)=\bigcup \{ T^i \varphi^n[c]\mid c\in \A, \, 0\leq i < \abs{\varphi^n(c)}, \, i\equiv j \bmod 2^n\}, \quad 0\leq j<2^n
\] 
form the $T^{2^{n}}$-\emph{minimal (cyclic) partition} of $X$: for each $n\geq 1$, $X(n,j)$, $0\leq j<2^n$ is a minimal system under the map $T^{2^{n}}$ and $T(X(n,j)) = X(n,j+1)$, where $j+1$ should be understood modulo $2^n$, see e.g.\ \cite[Def.\ 1 \& Lem.\ 3]{DK-77} for details.
	One has $X(n+1,j)\subset X(n,j')$ if and only if $j\equiv j' \bmod 2^n$.
 			Hence, the  map $\pi\colon X\to \Z_2 = \varprojlim \Z/2^n\Z$ given by
 			\[\pi(x) = (z_n)_n\in \Z_2  \text{ if and only if } x\in X(n,z_n) \text{ for each } n\geq 1 \]
 			is a well-defined factor map.
 	One can identify each $z\in\Z_2$ with its unique $2$-adic expansion $(l_n)_{n\geq 0}\in \{0,1\}^{\N}$ via the equation $z = \sum_{n=0}^{\infty} l_{n}2^n$.
 	For each $x\in X$ and $n\geq 0$, define  $l_n$  to be  the  $n$th term in the base-2 expansion of $i$, where $i$ is such that $x\in X(n+1,i)$. 
  	 Then $(l_n)_{n\geq 0}$ is the $2$-adic expansion of $\pi(x)=z$.
	 
	 One can conveniently visualise the fibres of the factor map $\pi$ by the paths in a labelled forest $T_{\varphi}$ defined from $\varphi$ as follows (the reader may first look at Figure \ref{fig:bigree} and Table \ref{tab:sequence_lengths} to get the idea).
	 The vertices of the forest $T_{\varphi}$ at level (or depth) $n\geq 0$ are given by the cylinder sets 
	 \[T^i\varphi^{n+1}[c]\quad \text{for}\quad 0\leq i<\abs{\varphi^{n+1}(c)} \text{ and } c\in\A=\{a,b\}.\]
	 Since, by \eqref{eq:growthofletters}, $\abs{\varphi^n(a)} + \abs{\varphi^n(b)} = 2\cdot 4^n$ for each $n\geq 1$,  our forest $T_{\varphi}$ has $2\cdot 4^{n+1}$ vertices at level $n$.
	 In particular, it has $8$ vertices at the 0th level; it consists of 8 disjoint trees rooted at $T^i\varphi[c]$, $0\leq i<\abs{\varphi(c)}$, $c\in\A$.
	 
	For each $n\geq 1$, the node $T=T^i\varphi^{n+1}([c])$ in level $n$ is a child of the node $T^{i'}\varphi^n([c'])$ in level $n-1$  if and only if 
	$T^i\varphi^{n+1}[c]\subset T^{i'}\varphi^n[c']$. 
	 More combinatorially, one can see the parent-child relationship in $T_{\varphi}$ as follows:
	 For letters $c,d\in \A$ let $P_c(d)$ be the set of all prefixes $w$ of $\varphi(c)$ such that $wd$ is also a prefix of $\varphi(c)$.
	 In our case we have
	\begin{align*}
	P_a(a) &= \{\epsilon, abbbb\}      & P_a(b) &= \{a, ab, abb, abbb\} \\
	P_b(a) &= \{\epsilon, a\}         & P_b(b) &= \emptyset.
\end{align*}
		 Looking at the equations
		 	 \begin{equation}\label{eq:powern}
	 \varphi^{n+1}(a)= \varphi^{n}(a)\varphi^{n}(b)\varphi^{n}(b)\varphi^{n}(b)\varphi^{n}(b)\varphi^{n}(a)    \quad \mbox{and}\quad \varphi^{n+1}(b) = \varphi^{n}(a)\varphi^{n}(a),
	 \end{equation}
	we see that, for each $n\geq 1$,  the children of $T^i\varphi^n[c]$ are given by $T^{i+\abs{\varphi^n(v)}}\varphi^{n+1}[d]$, $d\in \A$, $v \in P_c(d)$.	 
	 
	Now we label the nodes in $T_{\varphi}$ with colours $0$ and $1$: Each node $T^i\varphi^n([c])$, $n\geq 1$, has label $l\in\{0,1\}$ given by the $(n-1)$-th term in the base-2 expansion of $i$.
	Figure \ref{fig:bigree} shows  (parts of) the first 4 levels of the tree in $T_{\varphi}$ starting at the node $\varphi[a]$; it should be read together with Table \ref{tab:sequence_lengths} and Equation \ref{eq:powern}.
	
	\begin{table}[htbp]
\centering
\caption{Lengths of the iterated letters for $n=1, 2, 3$.}
\label{tab:sequence_lengths}
\begin{tabular}{c|cc}
\hline
\textbf{n} & \textbf{$|\phi^n(a)|$} & \textbf{$|\phi^n(b)|$} \\
\hline
1 & 6   & 2   \\
2 & 20  & 12  \\
3 & 88  & 40  \\
\hline
\end{tabular}
\end{table}
	
\begin{figure}[htbp]
    \centering
    \includegraphics[width=\textwidth]{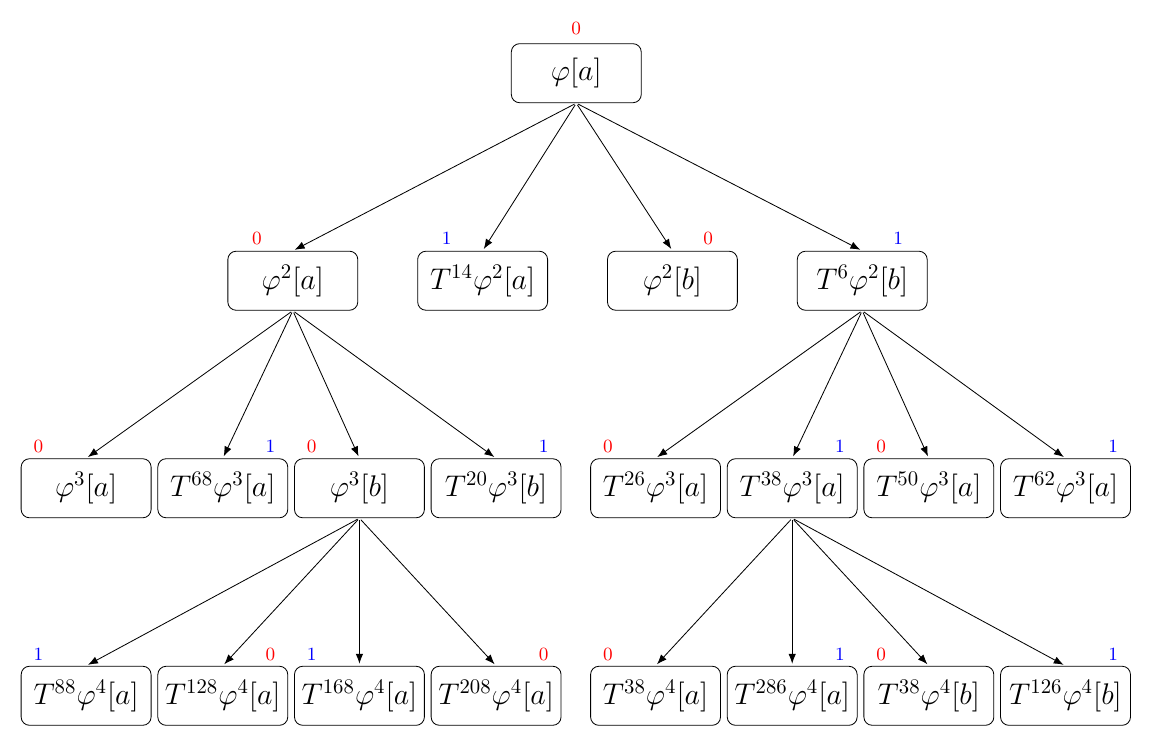}
    \caption{The first four levels of the tree in $T_{\varphi}$ rooted at $\varphi[a]$, showing 8 out of 16 nodes in level 2 and 8 out of 64 nodes in level 3.}
    \label{fig:bigree}
\end{figure}

	The points in $X_{\varphi}$ are in one-to-one correspondence with the infinite paths in $T_{\varphi}$. 
	 By the definition of the factor map $\pi\colon X_{\varphi}\to \Z_2$, the 0-1 sequence $(l_n)_{n\geq 0}$ labelling the path corresponding to $x\in X_{\varphi}$ is the $2$-adic expansion of $\pi(x)\in\Z_2$; i.e.\ $\pi(x) = \sum_{n=0}^{\infty} l_n2^n$. 
	 Clearly, for $z=\sum_{n=0}^{\infty} l_n2^n \in \Z_2$, the size of the fibre $\pi^{-1}(z)$ corresponds to the number of infinite paths in $T_{\varphi}$ labelled by $(l_n)_{n\geq 0}$.
	 To show that $\pi^{-1}(z)$ is uncountable it is thus enough to show that $T_{\varphi}$ has uncountably many infinite paths labelled by $(l_n)_{n\geq 0}$. 
	 We will show---as Figure \ref{fig:bigree} suggests---that each node in $T_{\varphi}$ has exactly 4 children: two labelled 0 and two labelled 1. 
	 This easily implies that for any labelling sequence $l=(l_n)$ the subforest of $T_{\varphi}$ given by all infinite paths labelled by $l$ comprises exactly 4 full binary trees and gives the desired claim; see also Figure \ref{fig:tree0}.
	 
	 \begin{figure}[htbp]
    \centering
    \includegraphics[width=\textwidth]{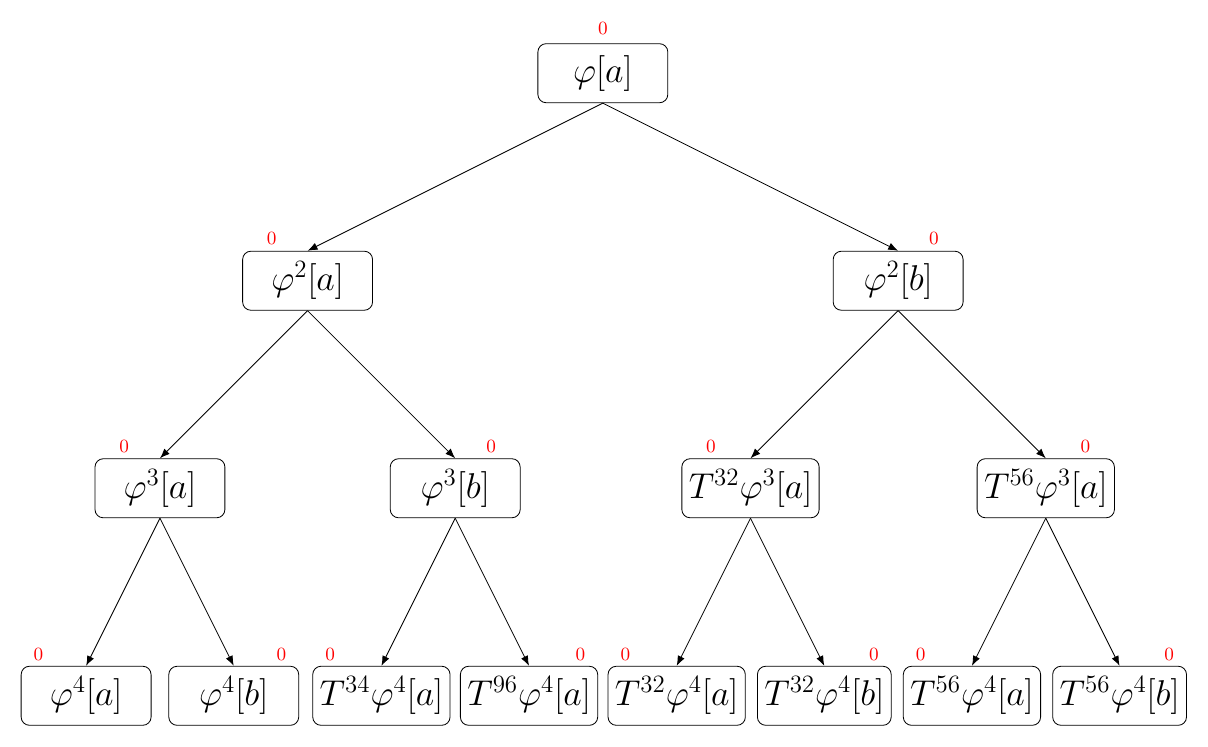}
    \caption{The complete first 4 levels of the subtree of $T_{\varphi}$ starting with $\varphi[a]$ corresponding to the label $l=0^{\infty}$. }
    \label{fig:tree0}
\end{figure}
	
	We denote by $\shortoverline{\hspace{0.4cm}}\colon \{0,1\}\to \{0,1\}$ the operation sending 0 to 1 and 1 to 0. 
	Fix $n\geq 1$ and  $i\in\N$. Write $i=\sum_{k=0}^{\infty}l_k2^k$ for the base-2 expansion of $i$; in particular, let $l_{n}$ be the $n$th term in base-2 expansion of $i$.
	The proof is now basically contained in Figures \ref{fig:treea} and \ref{fig:treeb} below.

	\begin{figure}[htbp]
    \centering
    \includegraphics[width=\textwidth]{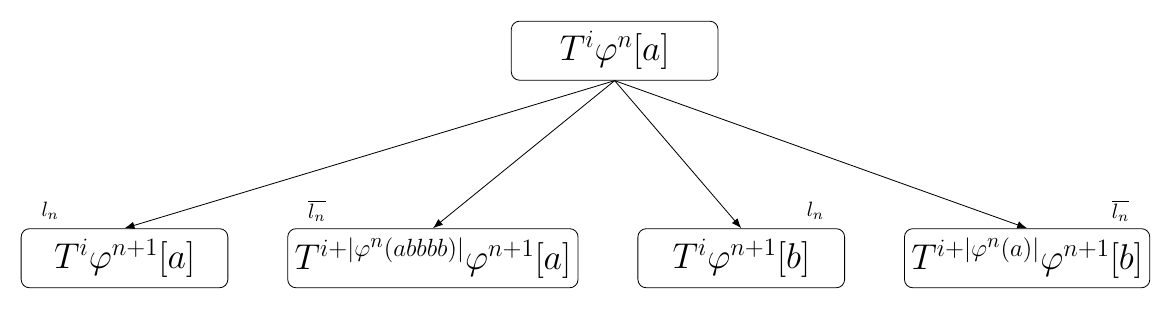}
    \caption{The children of $T^i\varphi^n[a]$ for $0\leq i<\abs{\varphi^n(a)}$ with their labels.}
    \label{fig:treea}
\end{figure}

\begin{figure}[htbp]
    \centering
    \includegraphics[width=\textwidth]{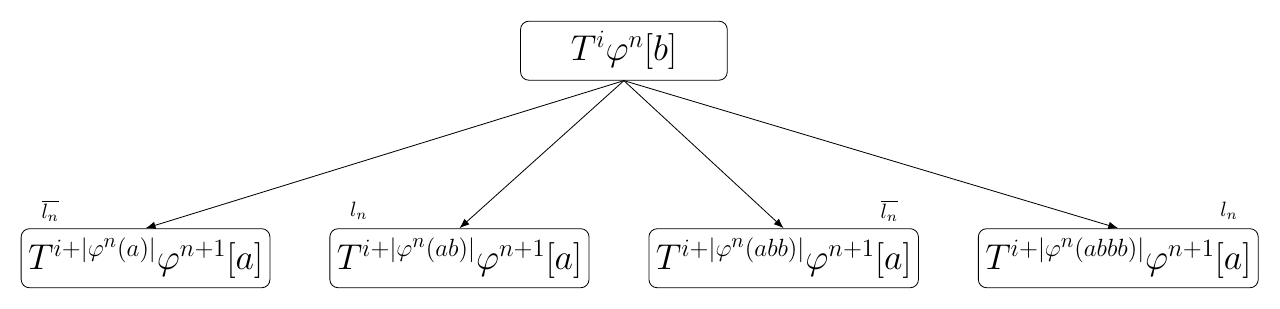}
    \caption{The children of $T^i\varphi^n[b]$ for $0\leq i<\abs{\varphi^n(b)}$ with their labels.}
    \label{fig:treeb}
\end{figure}

	The parent-child relationships in Figures \ref{fig:treea} and \ref{fig:treeb} follow from the equations \eqref{eq:powern}.
	The labellings in Figures \ref{fig:treea} and \ref{fig:treeb} follow easily from the fact that, by Equation \eqref{eq:growthofletters}, for any $n\geq 1$, both numbers $\abs{\varphi^n(a)}$ and $\abs{\varphi^n(b)}$  are divisible by $2^n$ and not divisible by $2^{n+1}$, i.e.\ they have the base-2 expansions of the form
	\begin{equation}\label{eq:nice_exp}
	0\cdot 2^0 + \dots + 0\cdot 2^{n-1} +1\cdot 2^n + \text{ higher terms}.
	\end{equation}
	For example, to see that the second node $T=T^{i + \abs{\varphi^n(ab)}}\varphi^{n+1}[a]$ in Figure \ref{fig:treeb} is labelled $l_n$ recall that the label of $T$ is given by the $n$th term in the base-2 expansion of $i + \abs{\varphi^n(ab)}$ and note that
	\begin{align*}
i &= l_0\cdot2^0 +l_1\cdot2^1 +\dots + l_{n-1}\cdot2^{n-1} +l_n\cdot2^n + \text{ higher terms} \\
\abs{\varphi^n(ab)} &= 0\cdot 2^0 + 0\cdot 2^1 +\dots +0\cdot 2^{n-1} + 0\cdot 2^n + \text{ higher terms},
\end{align*}

	where the second line follows from \eqref{eq:nice_exp} and the fact that $\abs{\varphi^n(ab)} = \abs{\varphi^n(a)} + \abs{\varphi^n(b)}$. 
	Adding the two equation, we see that the label of $T$  is $l_n$.
\end{proof}

	In the forest $T_{\varphi}$ from Theorem  \ref{prop:example_factortokadics}, for each $t\geq 1$ and each sequence $l=(l_n)_{n=0}^{t-1}\in \{0,1\}^{*}$ of length $t$, the number $n(t)$ of paths of length $t$ coloured by $l$ is dependent only on $t$ and equals \[n(t) = (1/2)^t\left( \abs{\varphi^t(a)} + \abs{\varphi^t(b)}\right) = 2^{t+1}.\]
	In particular, $T_{\varphi}$ has the property (\textbf{P}) that $n(t)$ grows exponentially with $t$.
	It is instructive to see an example of a substitution $\varphi$ with $\Z_2$ as the maximal equicontinous factor, whose forest $T_{\varphi}$ still has this property, but has only finitely many infinite paths labelled by some sequence $(l_n)\in \{0,1\}^{\N}$. 
	 One can construct such examples artificially from minimal constant length substitution systems whose factor map to the maximal equicontinuous factor is well-known to be everywhere finite-to-one.
	 Below  $\nu_2(n)$ denotes the 2-\emph{adic valuation} of an integer $n\in\N$; i.e.\ the biggest $m$ such that $n$ is divisible by $2^m$.

\begin{example}\label{ex:Thue-Morse_tree}
	Let
	\[\tau(a) = abba\quad \tau(b) = baab\] 
be the the square of the usual Thue--Morse substitution and consider the  system $X_{\tau}$ generated by $\tau$.
	Clearly, 
	 \[\abs{\tau^n(c)} = 4^n = 2^n\cdot f_c(n) = 2^n\cdot 2^n, \quad c\in\A;\]
	 compare with \eqref{eq:growthofletters}.
	 Again, $\tau$ is recognizable in $X_{\tau}$: we have the sequence of nested partitions
	 \[
\P_n=\{ T^i\varphi^n[c] \colon c\in\A \mbox{ and } 0\leq i < 4^n\},\quad n\geq 1.
	 \]
	 Here the map associating with each $x\in X_{\varphi}$ the itinerary $(P_n)_n$ with $P_n\in\P_n$ such that $x\in \bigcap P_n$ is injective on all points except the shifts of the fixed points \cite{CanteriniSiegel2001}.
	 Indeed, both one-sided fixed points $\tau^{\infty}(a)$ and $\tau^{\infty}(b)$ have two prolongations to the left in $X_{\tau}$, since all words $aa, ab, ba, bb$ lie in the language of $X_{\tau}$.
	 Thus, both itineraries   $(\tau^n[a])_{n\geq 1}$ and $(\tau^n[b])_{n\geq 1}$ (and their shifts) correspond to exactly two points in $X_{\varphi}$.

	 Now, one constructs forest $T_{\tau}$ in exactly the same way as in Theorem  \ref{prop:example_factortokadics}; see also Figure \ref{fig:treeThue} and Remark \ref{rem:tree_general} below.
     	\begin{figure}[htbp]
    \centering
    \includegraphics[width=\textwidth]{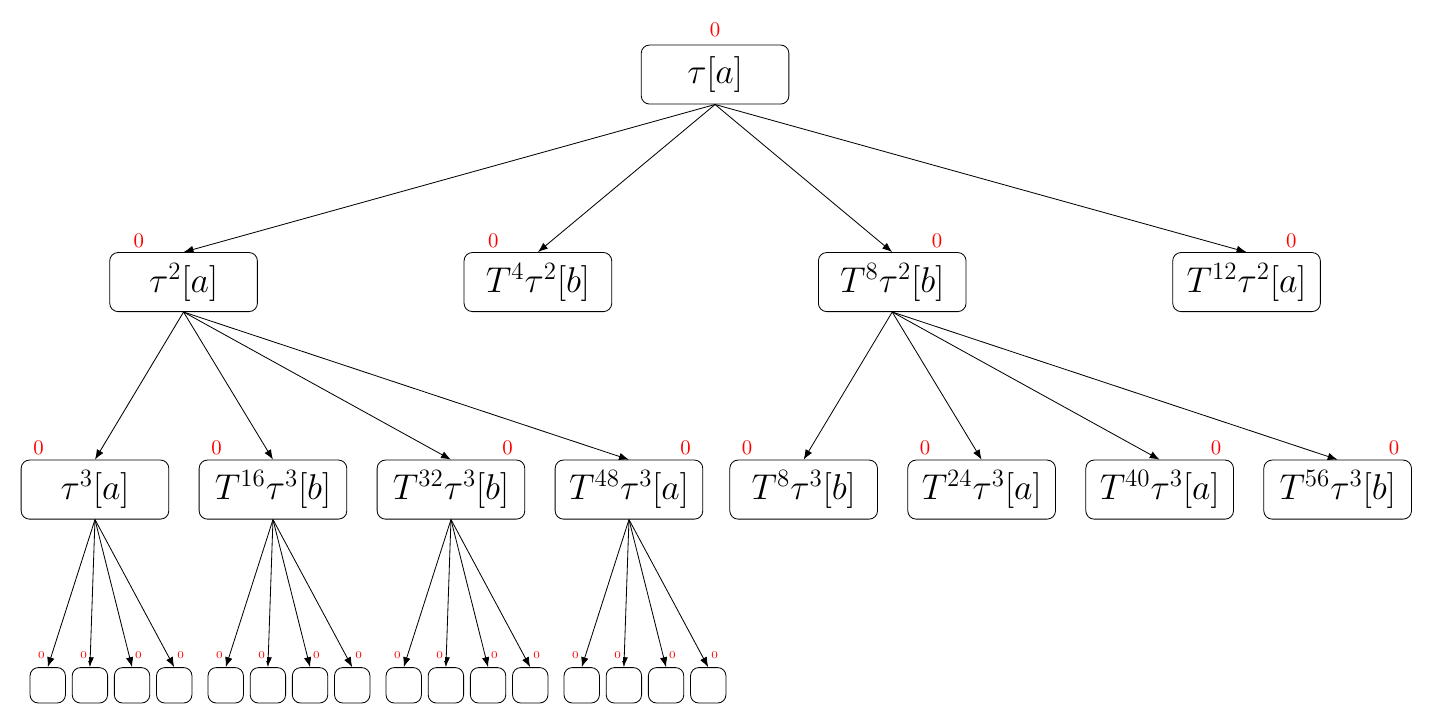}
    \caption{The complete first 4 levels of the tree with labels 0 starting with $\tau[a]$ for the Thue--Morse substitution $\tau$. Here the nodes $T^4\tau^2[b]$, $T^{12}\tau^2[a]$, $T^8\tau^3[b]$, $T^{24}\tau^3[a]$, $T^{40}\tau^3[a]$, $T^{56}\tau^3[b]$ are \emph{leaves}, i.e.\ they have no continuation with label 0.}
    \label{fig:treeThue}
\end{figure}
	 We will show that there are only two paths in $T_{\tau}$ coloured by $0^{\infty}$: $(\tau^n[a])_{n\geq 1}$ (which corresponds to two different fixed points)  and $(\tau^n[b])_{n\geq 1}$ (which corresponds to two other fixed points).
	 This of course aligns with what is known about the two-sided Thue--Morse system: the 0 fibre of its standard factor map to $\Z_2$ has 4 elements.
	 
	 As in Theorem  \ref{prop:example_factortokadics},  $T_{\tau}$ has $2\cdot 4^{n+1}$ nodes on level $n\geq 0$: half coloured by 0 and half coloured by 1.
	 The nodes at level $n\geq 0$ such that the path leading to them from the root is only coloured by 0,   are given exactly by $T^{i}\tau^n[c]$, $i\equiv 0 \bmod 2^n$, $c\in\A$; their number grows exponentially with $n$.

	It is not difficult to see that if a node $T^{i'}\tau^m[c']$, $0\leq i'<4^m$, $c'\in\A$ is a descendant of the  node $T^i \tau^n[c]$, $0\leq i<4^n$, $c\in\A$, then  $i' \equiv i \bmod 4^n$. 
	Thus, for $i \neq 0$, all descendants of $T^i \tau^n[c]$, $i\equiv 0 \bmod 2^n$ lying in level $\nu_2(i)$  are labelled by 1, since we necessarily have $\nu_2(i') = \nu_2(i)$. 
	Hence, starting from $T^i \tau^n[c]$ with $i\equiv 0 \bmod 2^n$, $i\neq 0$ all paths labelled only by 0 eventually terminate.
	Clearly, both $(\tau^n[a])_{n\geq 1}$ and $(\tau^n[b])_{n\geq 1}$ are valid infinite paths labelled only by 0, which shows the claim.

\end{example}

\begin{remark}\label{rem:tree_general}
	Let $\varphi\colon\A\to\A^*$ be a primitive \emph{left-proper} substitution. Assuming that $\varphi$ is left-proper is no loss of generality, since any minimal substitution system is conjugate to one generated by a left-proper substitution  \cite[Prop.\ 31]{D-2000}.
	In the following, for simplicity, we restrict ourselves only to systems $X_{\varphi}$ that have odometers of the form $\Z_p$ with $p$ prime as an equiconinuous factor; note that $\Z_p = \Z_{p^m}$ for any $m\geq 1$.
	
	Let $p$ be a prime.
	 A known criterion states that  $\Z_p$ is a factor of $X_{\varphi}$ if and only if  for each $l\in\N$ all sequences $(\abs{\varphi^n(a)})_n$, $a\in\A$ are eventually divisible by $p^l$, see e.g. \cite[Lem.\ 28]{D-2000}.
If $\Z_p$ is a factor of $X_{\varphi}$, then one can further show that---after possibly changing  $\varphi$ to its power---we have
	\begin{equation}\label{eq:forthetree}
	\abs{\varphi^n(a)} = q^{n-c} f_a(n),\quad a\in\A, \ n\geq 1,
	\end{equation}
	where $q=p^m$ for some $m\geq 1$, $f_a: \N \to \N$, and $c\geq 0$.
	This allows us to define the factor map $\pi\colon X_{\varphi}\to\Z_q$ analogous to the one in the proof of Proposition~\ref{prop:example_factortokadics}:
    indeed, for each $n\geq 1$ the sets
\[
X(n,j)=\bigcup \{ T^i \varphi^{n+c}[a]\mid a\in \A, \, 0\leq i < \abs{\varphi^{n+c}(a)}, \, i\equiv j \bmod q^n\}, \quad 0\leq j<q^n
\] 
form the $T^{q^{n}}$-\emph{cyclic partition} of $X_{\varphi}$.
    Furthermore, after taking some power of $\varphi$ again (and changing $q$ to the highest possible  power of $p$), if necessary, one can ensure that 
	\begin{equation}\label{eq:nice_form}
	\text{there exists } a\in\A \text{ such that } f_a(n)\neq 0 \bmod q \text{ for all } n\geq 1.
	\end{equation}

	Once \eqref{eq:forthetree} and \eqref{eq:nice_form} hold, by Theorem \ref{thm:automatic}, the fact that $X_{\varphi}$ is not hidden $p$-automatic is equivalent to the following statement
	\begin{equation}\label{eq:not_hidden_automatic}
	\text{the sequence } (f_a(n))_n \text{ is unbounded for some } a\in \A.
	\end{equation}
	 For a substitution $\varphi$ satisfying \eqref{eq:forthetree}, one defines the corresponding forest $T_{\varphi}$ with labels in $\{0,\dots, q-1\}$ in the analogous way as before: The nodes at level $n\geq 0$ are given by $T^i\varphi^{n+1+c}[a]$, $0\leq i<\abs{\varphi^{n+1+c}(a)}$, $a\in \A$ with $T^i\varphi^{n+1}([c])$, $n\geq c+1$ being  a child of the node $T^{i'}\varphi^{n}([c'])$ if and only if 
	$T^i\varphi^{n+1}[c]\subset T^{i'}\varphi^n[c']$.
	Each node $T^i\varphi^n[a]$, $n\geq c+1$ in level $n-c-1$ has label $l$ given by the $(n-1)$th term of the base-$q$ expansion of $i$.
	For each $x\in X_{\varphi}$ represented by some path in $T_{\varphi}$, its label $(l_n)$ gives the $q$-adic expansion of $\pi(x)\in \Z_q$.
	The unboundedness  of $f_a(n)$ in \eqref{eq:not_hidden_automatic} corresponds to (a variation of) Property (\textbf{P}). The fiber $\pi^{-1}(z)$ of some $z\in\Z_q$ is infinite if and only if there are infinitely many paths in $T_{\varphi}$ labelled by the $q$-adic expansion $l_n$ of $z$.
\end{remark}

Note that $T_{\varphi}$ depends also on the choice of $q$ in \eqref{eq:forthetree}. For the Thue--Morse substitution $\tau$ in Example \ref{ex:Thue-Morse_tree}, one can take $q=2$  or $q=4$. For $q=2$ one gets the forest as in the example; for $q=4$ one gets a forest, which doesn't satisfy Property (\textbf{P}) and, for it, it is clear that any inifinite sequence can label only finitely many paths.  In this sense, Example \ref{ex:Thue-Morse_tree} is somewhat artificial.

	One can produce examples like Example \ref{ex:Thue-Morse_tree} from any primitive $\varphi$ which is hidden automatic (characterised in Theorem \ref{thm:automatic}).
	For a primitive substitution $\varphi$ which is \emph{not hidden automatic}, but has  $\Z_q$ as a factor (i.e. for $\varphi$ satisfying \eqref{eq:forthetree}, \eqref{eq:nice_form}, and \eqref{eq:not_hidden_automatic}), the general structure of the corresponding forest $T_{\varphi}$ can be quite complicated; in particular, one does not necessarily always get nice $l$-ary trees as in Theorem  \ref{prop:example_factortokadics}. 
	 However, we were unable to find any examples of forests $T_{\varphi}$ with $\varphi$ not hidden automatic for which some label $(l_n)_{n\in \N}$ had only finitely many corresponding paths.
 In fact, we believe that this is not possible; see Conjecture \ref{con:dynamic_char}  in the introduction.

 \bibliographystyle{alpha}
 \bibliography{bibmorphic}

@article {CanteriniSiegel2001,
    AUTHOR = {Canterini, Vincent and Siegel, Anne},
     TITLE = {Automate des pr\'{e}fixes-suffixes associ\'{e} \`a une substitution
              primitive},
   JOURNAL = {J. Th\'{e}or. Nombres Bordeaux},
  FJOURNAL = {Journal de Th\'{e}orie des Nombres de Bordeaux},
    VOLUME = {13},
      YEAR = {2001},
    NUMBER = {2},
     PAGES = {353--369},
      ISSN = {1246-7405},
   MRCLASS = {37B15 (11B85 28A80 28D05 37B10)},
  MRNUMBER = {1879663},
MRREVIEWER = {Ali Messaoudi},
       DOI = {10.5802/jtnb.327},
       URL = {https://doi.org/10.5802/jtnb.327},
}

@article {Dur-periodicity,
    AUTHOR = {Durand, Fabien},
     TITLE = {H{D}0{L} {$\omega$}-equivalence and periodicity problems in
              the primitive case},
   JOURNAL = {Unif. Distrib. Theory},
  FJOURNAL = {Uniform Distribution Theory},
    VOLUME = {7},
      YEAR = {2012},
    NUMBER = {1},
     PAGES = {199--215},
      ISSN = {1336-913X},
   MRCLASS = {68R15 (03B25 03D40 37B10)},
  NOMRNUMBER = {2943168},
MRREVIEWER = {Michael H. Schraudner},
}

@article {FMN-96,
    AUTHOR = {Ferenczi, S\'{e}bastien and Mauduit, Christian and Nogueira,
              Arnaldo},
     TITLE = {Substitution dynamical systems: algebraic characterization of
              eigenvalues},
   JOURNAL = {Ann. Sci. \'{E}cole Norm. Sup. (4)},
  FJOURNAL = {Annales Scientifiques de l'\'{E}cole Normale Sup\'{e}rieure. Quatri\`eme
              S\'{e}rie},
    VOLUME = {29},
      YEAR = {1996},
    NUMBER = {4},
     PAGES = {519--533},
      ISSN = {0012-9593},
   MRCLASS = {58F11 (11K06 58F03)},
  NOMRNUMBER = {1386224},
MRREVIEWER = {Thomas Ward},
       URL = {http://www.numdam.org/item?id=ASENS_1996_4_29_4_519_0},
}

@article {D-french,
    AUTHOR = {Durand, Fabien},
     TITLE = {Sur les ensembles d'entiers reconnaissables},
   JOURNAL = {J. Th\'{e}or. Nombres Bordeaux},
  FJOURNAL = {Journal de Th\'{e}orie des Nombres de Bordeaux},
    VOLUME = {10},
      YEAR = {1998},
    NUMBER = {1},
     PAGES = {65--84},
      ISSN = {1246-7405},
   MRCLASS = {11B85},
  NOMRNUMBER = {1827286},
MRREVIEWER = {Jeffrey O. Shallit},
       URL = {http://jtnb.cedram.org/item?id=JTNB_1998__10_1_65_0},
}

@article {GLN-18,
    AUTHOR = {Grigorchuk, Rostislav and Lenz, Daniel and Nagnibeda, Tatiana},
     TITLE = {Spectra of {S}chreier graphs of {G}rigorchuk's group and
              {S}chroedinger operators with aperiodic order},
   JOURNAL = {Math. Ann.},
  FJOURNAL = {Mathematische Annalen},
    VOLUME = {370},
      YEAR = {2018},
    NUMBER = {3-4},
     PAGES = {1607--1637},
      ISSN = {0025-5831},
   MRCLASS = {05C50 (05C25 20D99)},
  NOMRNUMBER = {3770175},
MRREVIEWER = {Ali Reza Moghaddamfar},
       DOI = {10.1007/s00208-017-1573-8},
       URL = {https://doi.org/10.1007/s00208-017-1573-8},
}

@article {Spieg-21,
    AUTHOR = {Spiegelhofer, Lukas},
     TITLE = {G{aps} {in} {the} {Thue}--{Morse} {Word}},
   JOURNAL = {J. Aust. Math. Soc.},
  FJOURNAL = {Journal of the Australian Mathematical Society},
    VOLUME = {114},
      YEAR = {2023},
    NUMBER = {1},
     PAGES = {110--144},
      ISSN = {1446-7887},
   MRCLASS = {68R15 (11B85 68Q45)},
  MRNUMBER = {4533455},
       DOI = {10.1017/s1446788721000380},
       URL = {https://doi.org/10.1017/s1446788721000380},
}

@article {ASY-21,
    AUTHOR = {Allouche, Jean-Paul and Shallit,
              Jeffrey and Yassawi, Reem},
     TITLE = {How to prove that a sequence is not automatic},
   JOURNAL = {Expo. Math.},
  FJOURNAL = {Expositiones Mathematicae},
    VOLUME = {40},
      YEAR = {2022},
    NUMBER = {1},
     PAGES = {1--22},
      ISSN = {0723-0869},
   MRCLASS = {11B85},
  MRNUMBER = {4388977},
MRREVIEWER = {Ying-Jun Guo},
       DOI = {10.1016/j.exmath.2021.08.001},
       URL = {https://doi.org/10.1016/j.exmath.2021.08.001},
}

@article {hidden-20,
    AUTHOR = {Allouche, Jean-Paul and Dekking, Michel and Queff\'{e}lec,
              Martine},
     TITLE = {Hidden automatic sequences},
   JOURNAL = {Comb. Theory},
  FJOURNAL = {Combinatorial Theory},
    VOLUME = {1},
      YEAR = {2021},
     PAGES = {Paper No. 20, 15},
       DOI = {10.5070/C61055386},
       URL = {https://doi.org/10.5070/C61055386},
}

@article {DK-77,
    AUTHOR = {Dekking, Michel},
     TITLE = {The spectrum of dynamical systems arising from substitutions
              of constant length},
   JOURNAL = {Z. Wahrscheinlichkeitstheorie und Verw. Gebiete},
  FJOURNAL = {Zeitschrift f\"{u}r Wahrscheinlichkeitstheorie und Verwandte
              Gebiete},
    VOLUME = {41},
      YEAR = {1977/78},
    NUMBER = {3},
     PAGES = {221--239},
   MRCLASS = {54H20 (28A65)},
  NOMRNUMBER = {461470},
MRREVIEWER = {Ethan M. Coven},
       DOI = {10.1007/BF00534241},
       URL = {https://doi.org/10.1007/BF00534241},
}

@incollection {AAS-06,
    AUTHOR = {Allouche, Gabrielle and Allouche, Jean-Paul and Shallit,
              Jeffrey},
     TITLE = {Kolam indiens, dessins sur le sable aux \^{\i}les {V}anuatu, courbe
              de {S}ierpi\'{n}ski et morphismes de mono\"{\i}de},
      NOTE = {Num\'{e}ration, pavages, substitutions},
   JOURNAL = {Ann. Inst. Fourier (Grenoble)},
  FJOURNAL = {Universit\'{e} de Grenoble. Annales de l'Institut Fourier},
    VOLUME = {56},
      YEAR = {2006},
    NUMBER = {7},
     PAGES = {2115--2130},
      ISSN = {0373-0956},
   MRCLASS = {11B85 (28A80 68R15)},
  NOMRNUMBER = {2290776},
MRREVIEWER = {Jia-Yan Yao},
       URL = {http://aif.cedram.org/item?id=AIF_2006__56_7_2115_0},
}

@article {D-cobham,
    AUTHOR = {Durand, Fabien},
     TITLE = {Cobham's theorem for substitutions},
   JOURNAL = {J. Eur. Math. Soc. (JEMS)},
  FJOURNAL = {Journal of the European Mathematical Society (JEMS)},
    VOLUME = {13},
      YEAR = {2011},
    NUMBER = {6},
     PAGES = {1799--1814},
      ISSN = {1435-9855},
   MRCLASS = {68Q45 (11A67)},
  NOMRNUMBER = {2835330},
MRREVIEWER = {Michel Rigo},
       DOI = {10.4171/JEMS/294},
       URL = {https://doi.org/10.4171/JEMS/294},
}

@article {D-recurrence,
    AUTHOR = {Durand, Fabien},
     TITLE = {Decidability of uniform recurrence of morphic sequences},
   JOURNAL = {Internat. J. Found. Comput. Sci.},
  FJOURNAL = {International Journal of Foundations of Computer Science},
    VOLUME = {24},
      YEAR = {2013},
    NUMBER = {1},
     PAGES = {123--146},
      ISSN = {0129-0541},
   MRCLASS = {68Q42 (03B25 68Q45)},
  NOMRNUMBER = {3061952},
       DOI = {10.1142/S0129054113500032},
       URL = {https://doi.org/10.1142/S0129054113500032},
}

@book {LM-book,
    AUTHOR = {Lind, Douglas and Marcus, Brian},
     TITLE = {An introduction to symbolic dynamics and coding},
 PUBLISHER = {Cambridge University Press, Cambridge},
      YEAR = {1995},
     PAGES = {xvi+495},
      ISBN = {0-521-55124-2; 0-521-55900-6},
   MRCLASS = {58F03 (15A48 54H20 58F20 94A24 94B60)},
  NOMRNUMBER = {1369092},
MRREVIEWER = {Petr K rka},
       DOI = {10.1017/CBO9780511626302},
       URL = {https://doi.org/10.1017/CBO9780511626302},
}

@article {D-2000,
    AUTHOR = {Durand, Fabien},
     TITLE = {Linearly recurrent subshifts have a finite number of
              non-periodic subshift factors},
   JOURNAL = {Ergodic Theory Dynam. Systems},
  FJOURNAL = {Ergodic Theory and Dynamical Systems},
    VOLUME = {20},
      YEAR = {2000},
    NUMBER = {4},
     PAGES = {1061--1078},
      ISSN = {0143-3857},
   MRCLASS = {37B10 (37B20 54H20)},
  NOMRNUMBER = {1779393},
MRREVIEWER = {Gioia Maria Vago},
       DOI = {10.1017/S0143385700000584},
       URL = {https://doi.org/10.1017/S0143385700000584},
}

@article {Mosse-92,
    AUTHOR = {Moss\'{e}, Brigitte},
     TITLE = {Puissances de mots et reconnaissabilit\'{e} des points fixes d'une
              substitution},
   JOURNAL = {Theoret. Comput. Sci.},
  FJOURNAL = {Theoretical Computer Science},
    VOLUME = {99},
      YEAR = {1992},
    NUMBER = {2},
     PAGES = {327--334},
      ISSN = {0304-3975},
   MRCLASS = {68R15},
  NOMRNUMBER = {1168468},
MRREVIEWER = {Wit Fory\'{s}},
       DOI = {10.1016/0304-3975(92)90357-L},
       URL = {https://doi.org/10.1016/0304-3975(92)90357-L},
}

@article {DL-2018,
    AUTHOR = {Durand, Fabien and Leroy, Julien},
     TITLE = {Decidability of isomorphism and factorization between minimal
              substitution subshifts},
   JOURNAL = {Discrete Anal.},
  FJOURNAL = {Discrete Analysis},
      YEAR = {2022},
     PAGES = {Paper No. 7, 65},
   MRCLASS = {37B10 (03B25 68R15)},
  MRNUMBER = {4472646},
}

@article {BKK,
    AUTHOR = {Byszewski, Jakub and Konieczny, Jakub and Krawczyk, El\.{z}bieta},
     TITLE = {Substitutive systems and a finitary version of {C}obham's
              theorem},
   JOURNAL = {Combinatorica},
  FJOURNAL = {Combinatorica. An International Journal on Combinatorics and
              the Theory of Computing},
    VOLUME = {41},
      YEAR = {2021},
    NUMBER = {6},
     PAGES = {765--801},
      ISSN = {0209-9683},
   MRCLASS = {37B10 (11B85 37A44 37B51 68Q45)},
  MRNUMBER = {4357430},
MRREVIEWER = {Manfred G. Madritsch},
       DOI = {10.1007/s00493-020-4311-x},
       URL = {https://doi.org/10.1007/s00493-020-4311-x},
}

@article {MY-21,
    AUTHOR = {M\"{u}llner, Clemens and Yassawi, Reem},
     TITLE = {Automorphisms of automatic shifts},
   JOURNAL = {Ergodic Theory Dynam. Systems},
  FJOURNAL = {Ergodic Theory and Dynamical Systems},
    VOLUME = {41},
      YEAR = {2021},
    NUMBER = {5},
     PAGES = {1530--1559},
      ISSN = {0143-3857},
   MRCLASS = {37B10 (11B85 37B05 37B15)},
  NOMRNUMBER = {4191885},
       DOI = {10.1017/etds.2020.13},
       URL = {https://doi.org/10.1017/etds.2020.13},
}

@article {Durand-dynamical,
    AUTHOR = {Durand, Fabien},
     TITLE = {Cobham-{S}emenov theorem and {$\Bbb N^d$}-subshifts},
   JOURNAL = {Theoret. Comput. Sci.},
  FJOURNAL = {Theoretical Computer Science},
    VOLUME = {391},
      YEAR = {2008},
    NUMBER = {1-2},
     PAGES = {20--38},
      ISSN = {0304-3975},
   MRCLASS = {68Q45},
  NOMRNUMBER = {2381349},
MRREVIEWER = {Michel Rigo},
       DOI = {10.1016/j.tcs.2007.10.027},
       URL = {https://doi.org/10.1016/j.tcs.2007.10.027},
}

@article {BSTY,
    AUTHOR = {Berth\'{e}, Val\'{e}rie and Steiner, Wolfgang and Thuswaldner, J\"{o}rg M.
              and Yassawi, Reem},
     TITLE = {Recognizability for sequences of morphisms},
   JOURNAL = {Ergodic Theory Dynam. Systems},
  FJOURNAL = {Ergodic Theory and Dynamical Systems},
    VOLUME = {39},
      YEAR = {2019},
    NUMBER = {11},
     PAGES = {2896--2931},
      ISSN = {0143-3857},
   MRCLASS = {37B51},
  NOMRNUMBER = {4015135},
MRREVIEWER = {Vladimir Garc\'{\i}a-Morales},
       DOI = {10.1017/etds.2017.144},
       URL = {https://doi.org/10.1017/etds.2017.144},
}

@book {Queffelec-book,
    AUTHOR = {Queff\'{e}lec, Martine},
     TITLE = {Substitution dynamical systems---spectral analysis},
    SERIES = {Lecture Notes in Mathematics},
    VOLUME = {1294},
   EDITION = {Second},
 PUBLISHER = {Springer-Verlag, Berlin},
      YEAR = {2010},
     PAGES = {xvi+351},
      ISBN = {978-3-642-11211-9},
   MRCLASS = {37B10 (11K06 28D05 37A30 37A45 37B15)},
  NOMRNUMBER = {2590264},
MRREVIEWER = {T\'{u}lio O. Carvalho},
       DOI = {10.1007/978-3-642-11212-6},
       URL = {https://doi.org/10.1007/978-3-642-11212-6},
}

@book {AlloucheShallit-book,
    AUTHOR = {Allouche, Jean-Paul and Shallit, Jeffrey},
     TITLE = {Automatic sequences},
      NOTE = {Theory, applications, generalizations},
 PUBLISHER = {Cambridge University Press, Cambridge},
      YEAR = {2003},
     PAGES = {xvi+571},
      ISBN = {0-521-82332-3},
   MRCLASS = {11B85 (11Z05 37A45 37B10 68Q45 68R15 94A45)},
  NOMRNUMBER = {1997038},
MRREVIEWER = {Val\'{e}rie Berth\'{e}},
       DOI = {10.1017/CBO9780511546563},
       URL = {https://doi.org/10.1017/CBO9780511546563},
}

@article {MR,
    AUTHOR = {Maloney, Gregory R. and Rust, Dan},
     TITLE = {Beyond primitivity for one-dimensional substitution subshifts
              and tiling spaces},
   JOURNAL = {Ergodic Theory Dynam. Systems},
  FJOURNAL = {Ergodic Theory and Dynamical Systems},
    VOLUME = {38},
      YEAR = {2018},
    NUMBER = {3},
     PAGES = {1086--1117},
      ISSN = {0143-3857},
   MRCLASS = {37B50 (52C23 54H20)},
  NOMRNUMBER = {3784255},
MRREVIEWER = {Samuel Petite},
       DOI = {10.1017/etds.2016.58},
       URL = {https://doi.org/10.1017/etds.2016.58},
}

@article {DL-17,
    AUTHOR = {Durand, Fabien and Leroy, Julien},
     TITLE = {The constant of recognizability is computable for primitive
              morphisms},
   JOURNAL = {J. Integer Seq.},
  FJOURNAL = {Journal of Integer Sequences},
    VOLUME = {20},
      YEAR = {2017},
    NUMBER = {4},
     PAGES = {Art. 17.4.5, 15},
   MRCLASS = {68R15},
  NOMRNUMBER = {3622264},
MRREVIEWER = {Dirk Frettl\"{o}h},
}

\end{document}